\title[TCS with unit speed and collision avoidance]{Thermodynamic Cucker-Smale ensemble with unit speed and its sufficient framework for collision avoidance}
\author[Ahn]{Hyunjin Ahn}
\address[Hyunjin Ahn]{\newline Department of Mathematics, Myongji University, Gyeonggi-do 17058, Republic of Korea}
\email{yagamelaito@snu.ac.kr}
\author[Byeon]{Junhyeok Byeon$^\dagger$}
\address[Junhyeok Byeon]{\newline Research Institute of Basic Sciences, Seoul National University, Seoul 08826, Republic of Korea}
\email{giugi2486@snu.ac.kr}
\author[Ha]{Seung-Yeal Ha}
\address[Seung-Yeal Ha]{\newline Department of Mathematical Sciences and Research Institute of Mathematics, Seoul National University, Seoul 08826, Republic of Korea}
\email{syha@snu.ac.kr}
\newtheorem{theorem}{Theorem}[section]
\newtheorem{lemma}{Lemma}[section]
\newtheorem{proposition}{Proposition}[section]
\newtheorem{remark}{Remark}[section]
\newtheorem{example}{Example}[section]
\newtheorem{definition}{Definition}[section]
\newcommand{\bbr}{\mathbb R}
\newcommand{\bbc}{\mathbb C}
\newcommand{\vast}{\bBigg@{4}}
\newcommand{\Vast}{\bBigg@{5}}
\begin{document}

\date{\today}

\subjclass{82C10 82C22 35B37} \keywords{Thermodynamic, Cucker-Smale model, Unit-speed, Collision avoidance, finite-in-time collision, Dissipative structure, Asymptotic flocking, Temperature equilibrium.}

\thanks{\textbf{Acknowledgment.} 
The work of H. Ahn was supported by the National Research Foundation of Korea(NRF) grant funded by the Korea government(MSIT) (2022R1C12007321), {the work of J. Byeon was supported by the National Research Foundation of Korea(NRF) grant funded by the Korean government (MEST) (N0.2019R1A6A1A10073437)} and the work of S.-Y. Ha was partially supported by the National Research Foundation of Korea Grant (NRF-2020R1A2C3A01003881). \newline
$\dagger$: Corresponding author}
\begin{abstract}	
We investigate a Cucker-Smale-type flocking model for multi-agent systems that move with constant speed. The model incorporates both kinematic observables and internal energy (temperatures) in the agents' interactions. Traditionally, collision avoidance in the absence of speed limitation is achieved by introducing singularities into the communication rule. However, when a unit speed constraint is applied, the mechanism of collision avoidance can differ, and the singularity may not necessarily prevent collisions. In this paper, we propose a framework that generates collision avoidance, asymptotic flocking, thermal equilibrium, and strict spacing between agents, subject to sufficient conditions expressed by the initial condition, system parameters, and degree of singularity.
\end{abstract}

\maketitle \centerline{\date}


\section{Introduction} \label{sec:1}
\setcounter{equation}{0}

Emergent flocking dynamics are commonly observed in interacting many-body systems in nature and human society, such as the synchronization of fireflies and pacemaker cells \cite{B-B, Er, Wi2}, aggregation of bacteria \cite{T-B}, flocking of birds \cite{C-S}, and swarming of fish \cite{D-M1, T-T}. These fields are briefly introduced in relevant literature \cite{A-B, A-B-F, C-H-L, F-T-H, O2, P-R-K, Str, VZ, Wi1}. In this paper, we focus on flocking behavior, where particles move with a common velocity using simple rules in a limited environment.

Several dynamical systems have been proposed to study emergent behaviors in flocking systems, including the Kuramoto model, Cucker-Smale, and Winfree model, following Vicsek et al.'s work on the flocking model in \cite{V-C-B-C-S}. Since \cite{C-S}, researchers have investigated various aspects of the Cucker-Smale model, which is constructed using Newton-like second-order models for position-velocity for the mean-field limit \cite{A-H-K, A-H-K-S-S, HKMRZ, H-L, H-K-Z}, kinetic models \cite{C-F-R-T, H-T}, hydrodynamic descriptions \cite{F-K, H-K-K, K-M-T1}, particle analysis \cite{C-D-P, C-F-R-T, C-H, C-H2, C-H-H-J-K, C-H-H-J-K2, C-H-L, C-K-P-P, C-L, C-H2, C-C-H-K-K}, temperature field \cite{H-K-Rug2, H-R}, relativistic setting \cite{A-H-K, A-H-K-S-S, A-H-K-S, B-H-K, H-K-R}, collision avoidance \cite{C-C-H-K-K, C-D1, C-D2, C-D3, C-K-P-P, M-P, P, P-K-H}, and others.

Previous studies have mainly focused on the Cucker-Smale model with constant speed, and have investigated aspects such as basic particle analysis \cite{C-H2}, bi-cluster flocking \cite{C-H-H-J-K}, multi-cluster flocking and necessary conditions for mono-cluster flocking \cite{H-K-Z2}, and time-delay effects \cite{C-H3}. However, these studies were originally motivated by the Cucker-Smale model, and the author \cite{A2} extended it to the unit-speed Cucker-Smale model with a temperature field. To motivate our study, we briefly introduce the dynamic system of the thermodynamic Cucker-Smale model, which is a second-order system for \emph{position-velocity-temperature} $(x_i, v_i, T_i)$, as given in \cite{H-K-Rug2}:

\begin{equation}
\begin{cases} \label{TCS}
\displaystyle \frac{d{x}_i}{dt} =v_i,\quad t>0,\quad  i=1,\cdots,N,\\
\displaystyle \frac{dv_i}{dt}=\displaystyle\frac{\kappa_1}{N}\sum_{j=1}^{N}\phi(\|x_i-x_j\|)\left(\frac{v_j}{T_j}-\frac{v_i}{T_i}\right),\\
\displaystyle \frac{d}{dt}\left({T}_i+\frac{1}{2}{\|v_i\|^2}\right)=\displaystyle\frac{\kappa_2}{N}\sum_{j=1}^{N}\zeta(\|x_i-x_j\|)\left(\frac{1}{T_i}-\frac{1}{T_j}\right),\\
\displaystyle (x_i(0),v_i(0),T_i(0))=(x_i^0,v_i^0,T_i^0)\in \mathbb{R}^{2d}\times \mathbb{R}_{>0},
\end{cases}
\end{equation}
where $N$ is the number of particles, and $\kappa_1$ and $\kappa_2$ are strictly positive coupling strengths. The communication weights $\phi$ and $\zeta$ are non-negative, bounded, locally Lipschitz continuous and monotone decreasing functions mapping from $\mathbb{R}+$ to $\mathbb{R}_+$.

To modify the velocity coupling term in $\eqref{TCS}_2$ and ensure unit speed of the particles, the author in \cite{A2} applied a derivation idea from the unit-speed Cucker-Smale studied in \cite{C-H2} as shown below:

\[\phi(\|x_i-x_j\|)\left(\frac{v_j}{T_j}-\frac{v_i}{T_i}\right) \rightarrow \phi(\|x_i-x_j\|)\left(\frac{v_j}{T_j}-\frac{\langle v_j,v_i\rangle v_i}{T_j\|v_i\|^2}\right).\]
\indent As a result, the thermodynamic Cucker-Smale model with unit speed is proposed as follows,

\begin{equation}
\begin{cases} \label{TCSUS}
\displaystyle \frac{d{x}_i}{dt} =v_i,\quad t>0,\quad  i=1,\cdots,N,\\
\displaystyle \frac{dv_i}{dt}=\displaystyle\frac{\kappa_1}{N}\sum_{j=1}^{N}\phi(\|x_i-x_j\|)\left(\frac{v_j}{T_j}-\frac{\langle v_j,v_i\rangle v_i}{T_j\|v_i\|^2}\right),\\
\displaystyle \frac{dT_i}{dt}=\displaystyle\frac{\kappa_2}{N}\sum_{j=1}^{N}\zeta(\|x_i-x_j\|)\left(\frac{1}{T_i}-\frac{1}{T_j}\right),\\
\displaystyle (x_i(0),v_i(0),T_i(0))=(x_i^0,v_i^0,T_i^0)\in \mathbb{R}^{d}\times\mathbb{S}^{d-1}\times \mathbb{R}_{>0},
\end{cases}
\end{equation}where $N$, $\kappa_1$, $\kappa_2$ and the communication weight $\phi$, $\zeta$ are defined as the above \eqref{TCS} and $\mathbb{S}^d$ is an unit $d$-sphere. Specifically, we have

\begin{align}\label{A-1}
\begin{aligned}
&0\leq\phi(r)\leq \phi(0)=1,~ (\phi(r_1)-\phi(r_2))(r_1-r_2)\leq 0,~ \forall r,r_1,r_2\geq 0,~ \phi(\cdot)\in \text{Lip}^{\text{loc}}(\mathbb{R}_+;\mathbb{R}_+),\\
&0\leq\zeta(r)\leq \zeta(0)=1,~ (\zeta(r_1)-\zeta(r_2))(r_1-r_2)\leq 0,~\forall r,r_1,r_2\geq 0,~ \zeta(\cdot)\in \text{Lip}^{\text{loc}}(\mathbb{R}_+;\mathbb{R}_+),\\
&\mathbb{S}^{d-1}:=\left\{x:=(x^1,\cdots,x^d)\bigg|\sum_{i=1}^d |x^i|^2=1,\right\}~\text{where $x^i$ is $i$-th component of vector $x\in\mathbb{R}^d$.}
\end{aligned}
\end{align} In particular, we note that $\eqref{TCS}_3$ is equivalent to the $\eqref{TCSUS}_3$ by the following lemma.

\begin{lemma}\label{L1.1}\emph{\cite{A2}}
	Let $\{(x_i,v_i,T_i)\}_{i=1}^N$ be a solution to the system \eqref{TCSUS}. Then, one has
	\[\|v_i(t)\|=1,\quad\forall t\in [0,\infty),\hspace{0.2cm}\forall i=1,\cdots, N.\]
\end{lemma}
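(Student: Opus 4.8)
The plan is to exploit the fact that the velocity coupling term on the right-hand side of $\eqref{TCSUS}_2$ is pointwise orthogonal to $v_i$, which forces $\|v_i\|^2$ to be a conserved quantity along the flow.

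First I would address well-posedness near $t=0$. Since the initial datum satisfies $v_i^0\in\mathbb{S}^{d-1}$, in particular $\|v_i^0\|=1>0$, and the only sources of non-Lipschitz behavior in the vector field of \eqref{TCSUS} are the factors $1/\|v_i\|^2$, $1/T_i$ and $1/T_j$ (whose denominators are all strictly positive at $t=0$), the right-hand side is locally Lipschitz in a neighborhood of the initial configuration. Hence the Cauchy--Lipschitz theorem provides a unique $C^1$ solution on a maximal interval $[0,t^*)$ along which $\|v_i(t)\|>0$ and $T_i(t)>0$ for every $i$.

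On $[0,t^*)$, I would then compute directly, for each fixed $i$,
\[
\frac{d}{dt}\|v_i\|^2 = 2\Big\langle v_i,\frac{dv_i}{dt}\Big\rangle = \frac{2\kappa_1}{N}\sum_{j=1}^{N}\phi(\|x_i-x_j\|)\left(\frac{\langle v_i,v_j\rangle}{T_j}-\frac{\langle v_j,v_i\rangle\,\langle v_i,v_i\rangle}{T_j\|v_i\|^2}\right).
\]
Using $\langle v_i,v_i\rangle=\|v_i\|^2$, each summand vanishes identically, so $\frac{d}{dt}\|v_i\|^2\equiv 0$ on $[0,t^*)$, and therefore $\|v_i(t)\|^2=\|v_i^0\|^2=1$ there.

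Finally, to promote this to the whole half-line, I would invoke a standard continuation argument: the identity $\|v_i(t)\|\equiv 1$ shows the solution stays uniformly away from the singular set $\{\|v_i\|=0\}$, so the velocity component cannot degenerate and the solution cannot leave its region of regularity in finite time; hence $t^*=\infty$ and $\|v_i(t)\|=1$ for all $t\in[0,\infty)$ and all $i$. I do not expect a genuine obstacle here: the only delicate point is the singular denominator $1/\|v_i\|^2$ in $\eqref{TCSUS}_2$, and the orthogonality identity above is precisely what rules it out; the remainder is an elementary inner-product computation.
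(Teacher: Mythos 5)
Your proof is correct and follows essentially the same route as the paper: taking the inner product of $\eqref{TCSUS}_2$ with $v_i$ and using the pointwise orthogonality of the coupling term to conclude $\frac{d}{dt}\|v_i\|^2=0$. The extra continuation argument you include is sound and only adds rigor to what the paper leaves implicit.
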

\noindent Therefore, the model \eqref{TCSUS} could be simplified using Lemma \ref{L1.1} as follows. 

\begin{equation}
\begin{cases} \label{TCSUSS}
\displaystyle \frac{d{x}_i}{dt} =v_i,\quad t>0,\quad  i=1,\cdots,N,\\
\displaystyle \frac{dv_i}{dt}=\displaystyle\frac{\kappa_1}{N}\sum_{j=1}^{N}\phi(\|x_i-x_j\|)\left(\frac{v_j-\langle v_j,v_i\rangle v_i}{T_j}\right),\\
\displaystyle \frac{dT_i}{dt}=\displaystyle\frac{\kappa_2}{N}\sum_{j=1}^{N}\zeta(\|x_i-x_j\|)\left(\frac{1}{T_i}-\frac{1}{T_j}\right),\\
\displaystyle (x_i(0),v_i(0),T_i(0))=(x_i^0,v_i^0,T_i^0)\in \mathbb{R}^{d}\times\mathbb{S}^{d-1}\times \mathbb{R}_{>0}.
\end{cases}
\end{equation}
\indent The thermodynamic system with unit speed constraints \eqref{TCSUSS} and its variants have been studied in the mathematical community. Notable examples include mono-cluster and bi-cluster flocking \cite{A2} and time-delay effect \cite{A1}. Throughout this paper, we adopt the simplest singular communication weight, or singular interaction kernel, denoted by $\phi$, for collision avoidance between each pair of all particles:

\[\phi(r):=\frac{1}{r^{\alpha}},\]

Note that since we are only concerned with the singularity when $r=0$, the explicit structure of $\phi$ is not essential. The avoidance of collisions between particles (or agents) is an important issue in the fields of mechanical engineering and motion control engineering. As such, it is reasonable to investigate the collision avoidance of many-body autonomous systems.\\

\begin{remark}
We assume the properties in \eqref{A-1} for the communication weight function $\zeta$. However, it will be demonstrated in the forthcoming proof that $\zeta$ can also have a singularity in the form of $\zeta(r):=\frac{1}{r^{\beta}}$, as in \cite{A}.
\end{remark}

\indent Before proceeding with our analysis of the model \eqref{TCSUSS}, we first need to recall the definitions of asymptotic flocking and thermal equilibrium.

\begin{definition}\label{D1.1}
	Let $Z=\{(x_i,v_i,T_i)\}_{i=1}^N$ be a global-in-time solution to the system \eqref{TCSUSS}.
\begin{enumerate}
\item The configuration $Z$ exhibits asymptotic flocking if the following conditions hold:
\begin{align*}
\begin{aligned}
&(i)~\text{Group formation:} \quad \sup_{0 \leq t < \infty} \max_{1\leq i,j\leq N} \|x_i(t)-x_j(t)\|<\infty,\\
&(ii)~ \text{Velocity alignment:} \quad \lim_{t \to \infty} \max_{1\leq i,j\leq N}  \|v_j(t)-v_i(t)\|=0,
\end{aligned}
\end{align*}
\item The configuration $Z$ exhibits thermal equilibrium if the following condition holds:
\[\qquad(iii)\text{Temperature equilibrium:} \quad \lim_{t \to \infty} \max_{1\leq i,j\leq N}  |T_j(t)-T_i(t)|=0.\]
\end{enumerate}
\end{definition}

\indent Our main goal in this paper is to demonstrate the global well-posedness (i.e., collision avoidance) of the system \eqref{TCSUSS} and its emergence dynamics under a sufficient framework in terms of initial data and system parameters. 

In the presence of a unit speed constraint, several mechanisms may differ from those used in the absence of speed limitation. For example, momentum is not conserved and collision may happen even if an interaction is strongly singular (see Example \ref{E2.1}). Therefore, it will be meaningful to investigate a framework that generates certain emergent dynamics. Our main goal in this paper is to demonstrate the global well-posedness (i.e., collision avoidance) of the system \eqref{TCSUSS} and its emergence dynamics under a sufficient framework in terms of initial data and system parameters. \newline

The paper is structured as follows. In Section \ref{sec:2}, we provide a brief review of previous results on the temperature field in $\eqref{TCSUSS}_3$ for the global well-posedness and present basic estimates that are crucial for Section \ref{sec:3} and Section \ref{sec:4}. In Section \ref{sec:3}, we study a sufficient framework for the global well-posedness of $\eqref{TCSUSS}$ under strongly singular interaction kernel $(\alpha\geq 1)$ and demonstrate the emergence of asymptotic flocking and temperature equilibrium under suitable initial data and systemic parameters. In Section \ref{sec:4}, we investigate the existence of collision in the two-particle system under weakly singular communication. Then, we verify the emergence dynamics of $\eqref{TCSUSS}$ including collision avoidance, asymptotic flocking, thermal equilibrium, and strict spacing between agents, where the explicit condition depends on the degree of singularity. Finally, Section \ref{sec:5} presents a brief summary of our main results and a discussion of remaining issues to be explored in future work. \\ \newline
\indent Before we describe our main results, we set the following notations for simplicity:

\begin{align*}
&\|\cdot\|=l_2~\text{norm},\quad \langle \cdot,\cdot\rangle=\text{standard inner product},\quad y^i=\text{i-th component of}~y\in\mathbb{R}^d,\\
&X:=(x_1,\cdots,x_N),~ V:=(v_1,\cdots,v_N),~ T:=(T_1,\cdots,T_N)~ \text{in the system}~ \eqref{TCSUSS},\\
& D_{Z}(t):=\max_{i,j}\|z_i(t)-z_j(t)\|\hspace{0.3cm}\text{for}\hspace{0.3cm} Z=(z_1,\cdots,z_N)\in \{X,V,T\},\hspace{0.3cm} \mathcal{A}(v):=\min_{ 1\leq i,j\leq N}\langle v_i,v_j\rangle.
\end{align*}
Note that the diameter functionals $D_X$, $D_V$ and $D_T$ are Lipschitz continuous and so these are almost everywhere differentiable. Hence, whenever we derive a differential inequality with respect to the diameter on a subset $t\in E\subset \mathbb{R}$, then it holds on a.e. $t\in E$. Since the diameters are Lipschitz function as a maximum of Lipschitz functions, it possible to estimate the diameters via Gr\"onwall's lemma. For the detailed descriptions, refer to Section \ref{sec:3} and Section \ref{sec:4}.

\section{Preliminaries} \label{sec:2}
\setcounter{equation}{0}

In this section, we provide an overview of the fundamental estimates that are essential for Section \ref{sec:3} and Section \ref{sec:4}. Additionally, we briefly review previous findings related to the sub-dynamical system $\eqref{TCSUSS}_3$ to ensure the global well-posedness of the system \eqref{TCSUSS}.

\subsection{Previous Results} \label{sec:2.1}

In this subsection, we revisit several previous facts regarding the sub-dynamical temperature system $\eqref{TCSUSS}_3$, which will be crucially used throughout the paper.

\subsubsection{Basic Properties}

In \cite{H-R}, the authors verified the conservation law of temperature sum and the entropy principle, which can be described as follows.

\begin{definition}\label{D2.1}\emph{\cite{H-R}}
	Let $\tau\in (0,\infty]$ and $(X,V,T)$ be a local-in-time solution to the singular system \eqref{TCSUSS} on $t\in (0,\tau)$. Then, the entropy is denoted by  
	\[\mathcal{S}:=\sum_{i=1}^{N}\ln(T_i).\]
\end{definition}

\begin{proposition}\emph{\cite{H-R}}\label{P2.1}
	Let $\tau\in (0,\infty]$. Suppose that $(X,V,T)$ be a local-in-time solution to the singular system \eqref{TCSUSS} on $t\in (0,\tau)$. Then, the following assertions hold.
	\begin{enumerate}
		\item (Conserved temperature sum) The total sum  $\sum_{i=1}^{N}T_i$ is conserved:
		\[\sum_{i=1}^{N}T_i(t)=\sum_{i=1}^N T_i^{0}:=NT^\infty,\quad \forall t\in [0,\tau).\]
		\item (Entropy principle) An entropy $\mathcal{S}$ is monotonically increasing:
		\[\frac{d\mathcal{S}(t)}{dt}=\frac{1}{2N}\sum_{i,j=1}^{N}\zeta(\|x_j-x_i\|)\left|\frac{1}{T_i}-\frac{1}{T_j}\right|^2\geq 0,\quad \forall t\in [0,\tau).\]
	\end{enumerate}
\end{proposition}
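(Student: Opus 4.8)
The plan is to prove both assertions by direct differentiation of the relevant quantities along solutions of $\eqref{TCSUSS}_3$, exploiting the antisymmetry of the temperature coupling term. For the conserved temperature sum, I would sum the third equation in \eqref{TCSUSS} over $i=1,\dots,N$ and obtain
\[
\frac{d}{dt}\sum_{i=1}^N T_i(t)=\frac{\kappa_2}{N}\sum_{i,j=1}^N \zeta(\|x_i-x_j\|)\left(\frac{1}{T_i}-\frac{1}{T_j}\right).
\]
The right-hand side vanishes because the summand is antisymmetric under the exchange $i\leftrightarrow j$ (here using that $\zeta(\|x_i-x_j\|)=\zeta(\|x_j-x_i\|)$ is symmetric), so the double sum is zero. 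Integrating in time over $[0,t]$ with $t<\tau$ gives the claimed conservation law, and the constant is by definition $\sum_{i=1}^N T_i^0=:NT^\infty$.

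For the entropy principle, I would differentiate $\mathcal{S}(t)=\sum_{i=1}^N\ln(T_i(t))$, which is legitimate since $T_i>0$ on $(0,\tau)$ (a fact that should be recalled from the earlier existence/positivity results, and which is also why the singular system makes sense). This yields
\[
\frac{d\mathcal{S}}{dt}=\sum_{i=1}^N\frac{1}{T_i}\frac{dT_i}{dt}=\frac{\kappa_2}{N}\sum_{i,j=1}^N\zeta(\|x_i-x_j\|)\frac{1}{T_i}\left(\frac{1}{T_i}-\frac{1}{T_j}\right).
\]
Now I would symmetrize: relabel $i\leftrightarrow j$ in the double sum, add the two expressions, and divide by $2$, using the symmetry of $\zeta(\|x_i-x_j\|)$. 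This produces
\[
\frac{d\mathcal{S}}{dt}=\frac{\kappa_2}{2N}\sum_{i,j=1}^N\zeta(\|x_i-x_j\|)\left(\frac{1}{T_i}-\frac{1}{T_j}\right)^2\geq 0,
\]
which is exactly the stated identity (note the paper's display omits $\kappa_2$; I would keep it, or flag the normalization). Nonnegativity is immediate since $\zeta\geq 0$ and the squared factor is nonnegative.

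Neither step presents a genuine obstacle; the only point requiring minor care is the justification of termwise differentiation and the use of positivity of the temperatures on the maximal interval, which should be cited from the preceding well-posedness discussion rather than re-proved here. I would also remark that the symmetrization trick is the same device underlying the conservation law, so the two parts can be presented in parallel. If one wants to be fully rigorous about the regularity, one notes that $t\mapsto T_i(t)$ is $C^1$ on $(0,\tau)$ because the right-hand side of $\eqref{TCSUSS}_3$ is continuous there, so all manipulations are classical.
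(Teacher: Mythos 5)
Your argument is correct and is the standard one: the paper itself gives no proof of Proposition \ref{P2.1}, quoting it from \cite{H-R}, and your antisymmetrization for the conservation law together with the symmetrization of $\sum_i \dot T_i/T_i$ for the entropy production is exactly the computation carried out in that reference. Your side remark is also right that, as displayed in the paper, the entropy identity is missing the factor $\kappa_2$ in front of $\frac{1}{2N}$; with the coupling in $\eqref{TCSUSS}_3$ the correct rate is $\frac{\kappa_2}{2N}\sum_{i,j}\zeta(\|x_i-x_j\|)\bigl(\frac{1}{T_i}-\frac{1}{T_j}\bigr)^2$, and the sign convention $\frac{1}{T_i}-\frac{1}{T_j}$ in the model is precisely what makes this quantity nonnegative. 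Your appeal to positivity of the $T_i$ on the interval of existence to justify differentiating $\ln T_i$ is the only regularity point needed, and it is legitimately deferred to the well-posedness discussion.
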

\indent Thus, by using the entropy principle one obtains the uniform boundedness of temperature of each particle on $t\in [0,\tau)$ in the system \eqref{TCSUSS}.

\begin{proposition}\emph{\cite{H-K-Rug2}}\label{P2.2}~\emph{(Monotonocity of max-min temperatures)}
	Let $\tau\in (0,\infty]$. Assume that $(X,V,T)$ be a local-in-time solution to the singular system \eqref{TCSUSS} on $t\in (0,\tau)$. Then, $\min_{1\leq i\leq N}T_i(t)$ is monotone increasing and $\max_{1\leq i\leq N}T_i(t)$ is monotone decreasing in $t\in [0,\tau)$. Hence, one has the uniform boundedness of temperature as below.
	\[0<\min_{1\leq i\leq N}T_i^{0}:=T_m^\infty\leq T_i(t) \leq \max_{1\leq i\leq N}T_i^{0}:=T_M^\infty,\quad i=1,\cdots,N,\quad t\in[0,\tau).\] 
\end{proposition}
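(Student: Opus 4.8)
The plan is to control the extremal temperatures directly rather than individual ones. Set $T_M(t):=\max_{1\le i\le N}T_i(t)$ and $T_m(t):=\min_{1\le i\le N}T_i(t)$. On any compact subinterval of $(0,\tau)$ the local solution is $C^1$ and the right-hand side of $\eqref{TCSUSS}_3$ is bounded there (the weights $\zeta$ are bounded by $1$ and, as long as the solution exists, the temperatures stay positive), so each $T_i$ is Lipschitz on that subinterval. Being finite maxima and minima of Lipschitz functions, $T_M$ and $T_m$ are Lipschitz, hence absolutely continuous and differentiable at a.e.\ $t\in(0,\tau)$.

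Next I would invoke the elementary selection fact: at a.e.\ $t$ where $T_M$ is differentiable there is an index $M_t$ with $T_{M_t}(t)=T_M(t)$ and $\frac{d}{dt}T_M(t)=\frac{d}{dt}T_{M_t}(t)$, and symmetrically an index $m_t$ for $T_m$. Substituting into $\eqref{TCSUSS}_3$ gives, for a.e.\ $t$,
\[
\frac{d}{dt}T_M(t)=\frac{\kappa_2}{N}\sum_{j=1}^{N}\zeta(\|x_{M_t}(t)-x_j(t)\|)\left(\frac{1}{T_{M_t}(t)}-\frac{1}{T_j(t)}\right).
\]
Since $T_{M_t}(t)=T_M(t)\ge T_j(t)>0$ for every $j$, each summand is $\le 0$, so $\frac{d}{dt}T_M(t)\le 0$ a.e.; the identical computation with $m_t$ in place of $M_t$ yields $\frac{d}{dt}T_m(t)\ge 0$ a.e. Because $T_M$ and $T_m$ are absolutely continuous, integrating these a.e.\ differential inequalities shows that $t\mapsto T_M(t)$ is monotone decreasing and $t\mapsto T_m(t)$ is monotone increasing on $[0,\tau)$.

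Finally, combining the monotonicity with the positivity of the initial data gives, for every $i$ and every $t\in[0,\tau)$,
\[
0<T_m^\infty:=\min_{1\le k\le N}T_k^{0}\le T_m(t)\le T_i(t)\le T_M(t)\le \max_{1\le k\le N}T_k^{0}=:T_M^\infty,
\]
which is the asserted uniform boundedness; note that the lower bound $T_m(t)\ge T_m^\infty>0$ also confirms a posteriori that $1/T_i$ remains well defined on the whole existence interval, closing the positivity bootstrap used at the start. The only genuinely delicate point is the a.e.\ differentiation of the envelopes $T_M,T_m$ together with the selection of an index realizing the extremum; once that standard lemma is in hand, the sign of the coupling term does all the work and the conclusion is immediate.
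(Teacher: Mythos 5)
Your proof is correct: the Lipschitz-envelope argument, the a.e.\ selection of a maximizing/minimizing index, and the sign analysis of $\frac{1}{T_{M_t}}-\frac{1}{T_j}\le 0$ (using $T_{M_t}\ge T_j>0$ and $\zeta\ge 0$) are exactly the standard route to this monotonicity statement. The paper itself gives no proof of Proposition \ref{P2.2} --- it is quoted from \cite{H-K-Rug2} --- and your argument is the one found there, so there is nothing to flag beyond noting that the bound $\zeta\le 1$ is inessential (nonnegativity of $\zeta$ and the $C^1$ regularity of the local solution already suffice).
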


\begin{remark}\label{R2.1}
	Under the identical initial temperature $T_1^0=\cdots T_N^0=T^0>0$, the model \eqref{TCSUSS} can be reduced to the standard unit-speed Cucker-Smale model with singular communications.
\end{remark} 
We present the previous results for the emergent behaviors of the system \eqref{TCSUS} under regular kernels satisfying \eqref{A-1}.
\begin{proposition}\label{P2.3}
	Let $(X,V,T)$ be a global-in-time solution to \eqref{TCSUS} and assume that there exists a positive constant $D_{X}^\infty$ satisfying \eqref{A-1} and
	\begin{equation*}
	D^2_{V}(0)<\frac{T_m^\infty\phi(D_{X}^\infty)}{2T_M^\infty}\quad\mbox{and}\quad D_{X}(0)+\frac{2T_M^\infty  D_{V}(0)}{\kappa_1{\phi(D_X^\infty)}}< D_{X}^\infty.
	\end{equation*}
	Then, one has $\forall t\in [0,\infty)$,
	\begin{align*}
	D_V^2(t)<2D_V^2(0)\quad \mbox{and}\quad D_X(t)<D_X^\infty.
	\end{align*}As a direct results, one has the following emergence dynamics: 
	\[D_V(t)\leq D_V(0)\exp\left(-\frac{\kappa_1\phi(D_X^\infty)}{2T_M^\infty}\right),\quad D_T(t)\leq D_T(0)\exp\left(-\frac{\kappa_2\zeta(D_X^\infty)}{(T_M^\infty)^2}t\right),\quad \forall t\in [0,\infty).\] 
\end{proposition}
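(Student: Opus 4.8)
The plan is to run a Grönwall-type bootstrap argument on the pair of diameter functionals $D_V$ and $D_X$, exploiting the monotone temperature bounds from Proposition \ref{P2.2} to control the singular-looking denominators $1/T_j$ uniformly. First I would derive a differential inequality for $D_V^2$. Since each $v_i$ has unit speed (Lemma \ref{L1.1}), $\|v_i-v_j\|^2 = 2(1-\langle v_i,v_j\rangle)$, so controlling $D_V^2$ is equivalent to controlling $1-\mathcal{A}(v)$. Picking indices $M,m$ that realize the extremal inner product (or differentiating $D_V^2$ directly via the a.e.-differentiability noted in the introduction), I would plug in $\eqref{TCSUS}_2$, split the coupling term $\frac{v_j}{T_j} - \frac{\langle v_j,v_i\rangle v_i}{T_j\|v_i\|^2} = \frac{1}{T_j}(v_j - \langle v_j,v_i\rangle v_i)$, and estimate the "good" (dissipative) part against the "bad" part. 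The good part is bounded below using $\phi(\|x_i-x_j\|)\geq \phi(D_X)$ (monotonicity of $\phi$ from \eqref{A-1}) and $1/T_j \geq 1/T_M^\infty$; the resulting inequality should read roughly
\[
\frac{d}{dt}D_V^2(t) \leq -\frac{\kappa_1\phi(D_X(t))}{T_M^\infty}\,D_V^2(t) + (\text{quadratic-in-}D_V \text{ correction from the projection term}),
\]
where the correction term, using unit speed and Cauchy–Schwarz, is itself $O(D_V^2)$ with a coefficient involving $T_m^\infty/T_M^\infty$; this is exactly where the factor $\tfrac12$ and the ratio $T_m^\infty/(2T_M^\infty)$ in the hypothesis on $D_V^2(0)$ enter.

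Second, for $D_X$ I would use $\frac{d}{dt}D_X(t) \leq D_V(t)$, which follows immediately from $\dot x_i = v_i$ and the fact that $D_X$ is a maximum of the Lipschitz functions $\|x_i-x_j\|$. Combined with the (not yet justified) exponential decay of $D_V$, this gives
\[
D_X(t) \leq D_X(0) + \int_0^t D_V(s)\,ds \leq D_X(0) + \frac{2T_M^\infty D_V(0)}{\kappa_1\phi(D_X^\infty)},
\]
and the second smallness assumption forces the right-hand side to stay below $D_X^\infty$. The bootstrap is then closed in the standard way: define $t^* := \sup\{t : D_X(s) < D_X^\infty \text{ for all } s<t\}$, argue by contradiction that $t^* = \infty$. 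On $[0,t^*)$ one has $\phi(D_X(t)) \geq \phi(D_X^\infty)$, so the $D_V^2$ inequality above becomes genuinely dissipative, $\frac{d}{dt}D_V^2 \leq -\frac{\kappa_1\phi(D_X^\infty)}{2T_M^\infty}D_V^2$, which yields both $D_V^2(t) < 2D_V^2(0)$ (needed to keep the projection-correction coefficient under control) and the claimed exponential decay of $D_V$; feeding this back into the $D_X$ bound shows $D_X(t^*) < D_X^\infty$ strictly, contradicting maximality of $t^*$ unless $t^* = \infty$.

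Finally, once $D_X(t) < D_X^\infty$ is established globally, the temperature decay is the easiest piece: differentiate $D_T$ along $\eqref{TCSUS}_3$, pick the extremal indices, use $\zeta(\|x_i-x_j\|) \geq \zeta(D_X^\infty)$ and the uniform bounds $T_m^\infty \leq T_i \leq T_M^\infty$ to turn $\frac{1}{T_i}-\frac{1}{T_j} = \frac{T_j - T_i}{T_iT_j}$ into a term bounded below by $\frac{\zeta(D_X^\infty)}{(T_M^\infty)^2}D_T$, giving $\frac{d}{dt}D_T(t) \leq -\frac{\kappa_2\zeta(D_X^\infty)}{(T_M^\infty)^2}D_T(t)$ and hence the stated exponential rate by Grönwall.

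I expect the main obstacle to be the bookkeeping in the $D_V^2$ estimate: because unit speed breaks the usual antisymmetry/momentum-conservation structure of the Cucker–Smale coupling, the projection term $\langle v_j,v_i\rangle v_i$ does not cancel cleanly, and one must carefully show its contribution is dominated by the dissipative term — this is precisely what dictates the constant $\tfrac12$ and the temperature-ratio threshold on $D_V^2(0)$, and getting the constants to match the statement (rather than merely "some small constant") will require the sharp form of the Cauchy–Schwarz / unit-speed identities. The circular dependence between "$D_V$ decays" (needs $D_X < D_X^\infty$) and "$D_X$ stays bounded" (needs $D_V$ to decay) is resolved cleanly by the continuity/maximality argument, so that part is routine.
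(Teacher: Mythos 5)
First, a point of reference: the paper itself states Proposition~\ref{P2.3} without proof, as a previous result (it is the regular-kernel flocking estimate of \cite{A2}); the closest arguments actually carried out in the paper are Theorem~\ref{T3.1} and Theorem~\ref{T3.2}. Those proofs use a different decomposition from yours: exploiting $D_V^2=2-2\langle v_{M_t},v_{m_t}\rangle$ and the spherical inequalities $\langle v_k,v_{m_t}\rangle\geq\langle v_k,v_{M_t}\rangle\langle v_{M_t},v_{m_t}\rangle$ (valid once $\mathcal{A}(v)>0$), they obtain the remainder-free inequality $\frac{d}{dt}D_V\leq-\frac{\kappa_1\mathcal{A}(v)(0)}{T_M^\infty}\phi(D_X)D_V$, paying with the constant $\mathcal{A}(v)(0)$ in place of $\tfrac12$ and with no smallness condition on $D_V(0)$. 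Your route (dissipation plus a projection remainder absorbed by smallness of $D_V(0)$) is the one that actually produces the constants in the statement of Proposition~\ref{P2.3}, and your bootstrap on $D_X$ and the temperature estimate coincide with what the paper does in Theorem~\ref{T3.1}.

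The one genuine gap is your assertion that the projection correction is ``quadratic in $D_V$'', i.e.\ $O(D_V^2)$. If both the dissipative term and the remainder were proportional to $D_V^2$, the sign of the right-hand side would be decided by the coefficients alone and the hypothesis $D_V^2(0)<\frac{T_m^\infty\phi(D_X^\infty)}{2T_M^\infty}$ would never be used --- so as written your first step cannot close. The correct bookkeeping is: writing $v_k-\langle v_k,v_i\rangle v_i=(v_k-v_i)+(1-\langle v_k,v_i\rangle)v_i$ and pairing with $v_{M_t}-v_{m_t}$, the remainder is bounded by $(1-\langle v_k,v_i\rangle)\,|\langle v_{M_t}-v_{m_t},v_i\rangle|\leq\tfrac12 D_V^2\cdot\tfrac12 D_V^2$, i.e.\ it is \emph{quartic} in $D_V$ with temperature coefficient $1/T_m^\infty$, while the dissipation is quadratic with coefficient $\phi(D_X)/T_M^\infty$ (here one also needs $\langle v_{M_t}-v_{m_t},v_k-v_{M_t}\rangle\leq 0$, which follows from the minimality of $\langle v_{M_t},v_{m_t}\rangle$, to lower-bound $\phi_{ik}$ by $\phi(D_X)$ safely). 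The a priori bound $D_V^2(t)<2D_V^2(0)<\frac{T_m^\infty\phi(D_X^\infty)}{T_M^\infty}$ is then precisely what lets the quartic remainder consume at most half of the quadratic dissipation, yielding the rate $\frac{\kappa_1\phi(D_X^\infty)}{2T_M^\infty}$. Correspondingly, your continuation set must track \emph{both} $D_X(s)<D_X^\infty$ and $D_V^2(s)<2D_V^2(0)$, not $D_X$ alone, since each bound is needed to keep the other's differential inequality dissipative. The remaining pieces ($|\frac{d}{dt}D_X|\leq D_V$, the contradiction at $t^*$, and the $D_T$ estimate via extremal indices and $\zeta(D_X)\geq\zeta(D_X^\infty)$) are correct and match the paper's treatment in Theorem~\ref{T3.1}.
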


\subsubsection{Collision avoidance via singular communication}

We briefly recap the relationship between the intensity of flocking force and its effects on collision between particles. In order for two particles to collide, they must first approach each other closely. If the flocking force is strong enough to cause a blow-up when particles come close, we can expect flocking to occur before the collision. Additionally, particles that are moving in almost the same direction are unlikely to collide, resulting in collision avoidance. Indeed, the next proposition states that strong singularity (that is, the integral diverges near the origin) deduce the collision avoidance.

\begin{proposition}[\cite{A}]

Let $\{x_i,v_i\}$ be a solution to \eqref{TCS} under $T_i \equiv 1$ and $\phi(r)=r^{-\alpha}$. Suppose that
\begin{center}
	$\alpha \geq 1, \quad$ and $\quad \min_{i \neq j}\|x_i^0-x_j^0\|>0$. 
\end{center}
Then we have the global-in-time collision-less state:
\[
	x_i(t) \neq x_j(t), \quad \forall i,j \in \{1,\cdots,N\}, ~ i \neq j, \quad \forall t \in \bbr_+.
\]
\end{proposition}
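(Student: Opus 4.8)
\medskip
\noindent\textbf{Sketch of a proof.}
The plan is to reduce \eqref{TCS} with $T_i\equiv1$ to the classical singular Cucker--Smale model, to collect its standard a priori bounds, and then to argue by contradiction at the first collision instant by means of a pairwise Lyapunov functional whose singular part is tuned to absorb the dissipative self-interaction. Under $T_i\equiv1$, the equation $\eqref{TCS}_2$ becomes $\dot v_i=\frac{\kappa_1}{N}\sum_j\phi(\|x_i-x_j\|)(v_j-v_i)$ with $\phi(r)=r^{-\alpha}$; on any time interval on which no two particles have collided the right-hand side is smooth in $(X,V)$, so the solution exists, is unique, and extends up to the first collision. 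On such an interval one has the familiar bounds: $D_V$ is non-increasing, since at the velocity-extremal pair every interaction term is dissipative, so $\|v_i-v_j\|\le D_V(0)=:M$; the mean velocity $\frac1N\sum_iv_i$ is conserved, so $\|v_i\|\le M'$ for a constant $M'$; and therefore $D_X$ stays finite on any finite interval. I will also use the dissipation identity
\[
\frac{d}{dt}\Big(\tfrac{1}{2N}\sum_{i,j}\|v_i-v_j\|^2\Big)=-\frac{\kappa_1}{N}\sum_{i,j}\phi(\|x_i-x_j\|)\,\|v_i-v_j\|^2,
\]
which gives $\int_0^T\sum_{i,j}\phi(\|x_i-x_j\|)\|v_i-v_j\|^2\,dt<\infty$ for every $T$ strictly before a collision.

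Suppose, to the contrary, that $t^\ast:=\inf\{t>0:\ x_i(t)=x_j(t)\ \text{for some}\ i\neq j\}$ is finite. Then all pairwise distances are positive on $[0,t^\ast)$, at least one pair $(i_0,j_0)$ satisfies $d(t):=\|x_{i_0}(t)-x_{j_0}(t)\|\to0$ as $t\uparrow t^\ast$, and any pair that does not collide at $t^\ast$ keeps its distance bounded below by a positive constant on the compact set $[0,t^\ast]$, so $\phi$ evaluated at such a pair remains bounded on $[0,t^\ast)$. For the colliding pair define
\[
\mathcal{F}(t):=\|v_{i_0}(t)-v_{j_0}(t)\|+\frac{2\kappa_1}{N}\int_{d(t)}^{d(0)}\phi(s)\,ds .
\]
Using $\dot d\ge-\|v_{i_0}-v_{j_0}\|$ one gets $\frac{d}{dt}\int_{d(t)}^{d(0)}\phi\le\phi(d)\,\|v_{i_0}-v_{j_0}\|$, while from
\[
\dot v_{i_0}-\dot v_{j_0}=-\frac{2\kappa_1}{N}\phi(d)(v_{i_0}-v_{j_0})+\frac{\kappa_1}{N}\sum_{k\neq i_0,j_0}\big[\phi(\|x_{i_0}-x_k\|)(v_k-v_{i_0})-\phi(\|x_{j_0}-x_k\|)(v_k-v_{j_0})\big]
\]
the self-interaction contributes $-\frac{2\kappa_1}{N}\phi(d)\|v_{i_0}-v_{j_0}\|$ to $\frac{d}{dt}\|v_{i_0}-v_{j_0}\|$. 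These two singular contributions cancel, so a.e.\ on $[0,t^\ast)$
\[
\frac{d}{dt}\mathcal{F}(t)\ \le\ \frac{\kappa_1}{N}\sum_{k\neq i_0,j_0}\Big[\phi(\|x_{i_0}-x_k\|)\,\|v_k-v_{i_0}\|+\phi(\|x_{j_0}-x_k\|)\,\|v_k-v_{j_0}\|\Big].
\]
When $N=2$ the right-hand side is vacuous, hence $\mathcal{F}$ is non-increasing and $\frac{2\kappa_1}{N}\int_{d(t)}^{d(0)}\phi\le\mathcal{F}(0)=\|v_{i_0}^0-v_{j_0}^0\|$ on $[0,t^\ast)$; since $\int_{d}^{d(0)}s^{-\alpha}\,ds\to+\infty$ as $d\to0^+$ for $\alpha\ge1$, this contradicts $d(t)\to0$, and no collision can occur.

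For $N\ge3$ the point I expect to be the main obstacle is bounding $\int_0^{t^\ast}$ of the third-particle terms above. Those with a particle $k$ that does not collide at $t^\ast$ are harmless, by the separation bound and the velocity bounds. The dangerous ones are those with $k$ in the same collision cluster as $(i_0,j_0)$, where $\phi(\|x_{i_0}-x_k\|)$ itself blows up. To control them I would either (i) work with the minimal gap $d_m(t):=\min_{i\neq j}\|x_i-x_j\|$, exploiting that at the pair realizing $d_m$ every other interaction weight is $\le\phi(d_m)$, so the third-particle contribution is dominated by $\phi(d_m)$ and one gets a closed differential inequality for $d_m$; or (ii) sum the pairwise functionals over the cluster and combine with the dissipation bound $\int_0^{t^\ast}\sum_{i,j}\phi\,\|v_i-v_j\|^2\,dt<\infty$ and Cauchy--Schwarz to prove $\int_0^{t^\ast}\sum_k\phi(\|x_{i_0}-x_k\|)\|v_k-v_{i_0}\|\,dt<\infty$. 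Either route shows $\int_{d(t)}^{d(0)}\phi$ is bounded on $[0,t^\ast)$, contradicting $d(t)\to0$ together with the divergence of $\int_{0^+}s^{-\alpha}\,ds$ for $\alpha\ge1$. Hence $x_i(t)\neq x_j(t)$ for all $i\neq j$ and all $t\in\bbr_+$.
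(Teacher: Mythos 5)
Your $N=2$ argument is correct and is the same mechanism the paper uses (in its Lemma \ref{L3.1}, the unit-speed analogue of this proposition): assume a first collision time, bound $\int\phi(d)\,|\dot d|$ by a velocity quantity that the singular self-interaction keeps under control, and contradict the divergence of $\int_{0^+}s^{-\alpha}ds$ for $\alpha\geq1$. But for $N\geq3$ there is a genuine gap, and you have correctly located it yourself: the singular interactions of $(i_0,j_0)$ with a third particle $k$ belonging to the same collision cluster. Neither of your two proposed repairs closes as stated. Route (i): at the minimizing pair the third-particle forcing is bounded by $C\phi(d_m)D_V(0)$, which is of the \emph{same order} as the dissipative term $-\tfrac{2\kappa_1}{N}\phi(d_m)\|v_{i_m}-v_{j_m}\|$ but is not multiplied by the small quantity $\|v_{i_m}-v_{j_m}\|$; the resulting differential inequality for your functional $\mathcal{F}$ has a non-integrable right-hand side and yields nothing. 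Route (ii): Cauchy--Schwarz gives $\int_0^{t^*}\phi_{i_0k}\|v_k-v_{i_0}\|\,dt\leq\bigl(\int_0^{t^*}\phi_{i_0k}\|v_k-v_{i_0}\|^2dt\bigr)^{1/2}\bigl(\int_0^{t^*}\phi_{i_0k}\,dt\bigr)^{1/2}$, and the second factor is exactly what you cannot control: since speeds are bounded, $\|x_{i_0}(t)-x_k(t)\|\leq C(t^*-t)$ for a cluster member $k$, so $\phi_{i_0k}\gtrsim(t^*-t)^{-\alpha}$ and the time integral diverges for $\alpha\geq1$. So the argument is circular precisely in the dangerous case.

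The fix, which is what the paper does, is to abandon the fixed pair $(i_0,j_0)$ and work with the whole collision cluster $[l]$ through its velocity diameter $D_{V,[l]}(t)=\|v_{M_t}-v_{m_t}\|$ evaluated at the time-dependent \emph{extremal} pair. Because $(M_t,m_t)$ maximizes the velocity gap within $[l]$, every intra-cluster interaction term — including the singular ones with third cluster members — has a favorable sign, and one obtains
\[
\frac{dD_{V,[l]}}{dt}\leq -C_1\,\phi(D_{X,[l]})\,D_{V,[l]}+C_2 ,
\]
where $C_2$ collects only the extra-cluster interactions, bounded via the separation $\delta>0$. Integrating this gives $\int_s^t\phi(D_{X,[l]})D_{V,[l]}\leq C_1^{-1}\bigl(D_{V,[l]}(s)+C_2(t-s)\bigr)$, and combining with $\bigl|\tfrac{d}{dt}D_{X,[l]}\bigr|\leq D_{V,[l]}$ shows the primitive $\Phi(D_{X,[l]}(t))$ stays bounded as $t\to t_0^-$, contradicting $D_{X,[l]}\to0$ and $\alpha\geq1$. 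Your pairwise Lyapunov functional is a special case of this ($[l]=\{i_0,j_0\}$, $C_2=0$ when $N=2$), but the passage to the cluster-extremal pair is the missing idea, not a technicality.
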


The natural question that arises is whether this collision avoidance property holds for model \eqref{TCSUS}. However, the answer is \emph{negative}.

\begin{example}\label{E2.1}
Consider the two-particle model $\{(x_i,v_i,T_i)\}_{i=1,2}$ on the real line governed by \eqref{TCSUS}. We pose
\[
	x_1^0<x_2^0, \quad v_1=1, \quad v_2=-1, \quad T_1=T_2=1.
\]
In this case, the dynamics simplifies into

\begin{equation*}
\begin{cases}
\displaystyle \frac{d{x}_i}{dt} =v_i,\quad t>0,\quad  i=1,\cdots,N,\\
\displaystyle \frac{dv_i}{dt}=\displaystyle\frac{\kappa_1}{N}\sum_{j=1}^{N}\phi(\|x_i-x_j\|)\left({v_j}-v_i^2 v_j \right),\\
\displaystyle (x_i(0),v_i(0))=(x_i^0,v_i^0)\in \mathbb{R}\times\{-1,1\}.
\end{cases}
\end{equation*}
Since each particle always has unit speed, we have $v_i^2 = 1$ for each $i$ and this yields $\dot{v}_i \equiv 0$, which leads to
\[
	v_1 \equiv 1, \quad v_2 \equiv -1.
\]
Therefore, two particles collide at time $(x_2^0-x_1^0)/2$, wether the communication $\phi$ is regular or singular.
\end{example}

As described in Example \ref{E2.1}, although models \eqref{TCS} and \eqref{TCSUS} are phenomenologically similar in that they exhibit flocking and thermal equilibration, their specific operating mechanisms are quite different. It is likely that research on collision avoidance in unit speed models like \eqref{TCSUS} has not been conducted for this reason, and as far as the authors are aware, results of this kind are being addressed for the first time in this paper.

\subsection{Basic estimates} \label{sec:2.2} 
In this subsection, we will show that $\mathcal{A}(v)$ is monotone increasing under an assumption $\mathcal{A}(v)(0)>0$.

\begin{lemma}\label{L2.1}
	Let $\tau\in (0,\infty]$ and $(X,V,T)$ be a local-in-time solution to the singular system \eqref{TCSUSS} on $t\in (0,\tau)$. Then, we can have that
	\[\|v_i(t)\|=1,\quad\forall t\in [0,\infty),\hspace{0.2cm}\forall i=1,\cdots, N.\]
\end{lemma}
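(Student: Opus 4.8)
The plan is to show that $\|v_i(t)\|^2$ is conserved along the flow by differentiating it and checking that the right-hand side of the velocity equation $\eqref{TCSUSS}_2$ is orthogonal to $v_i$. This is essentially a restatement of Lemma \ref{L1.1}, but now adapted to the simplified (and singular) system $\eqref{TCSUSS}$; the point is that the orthogonality is a pointwise algebraic identity that does not care whether $\phi$ is regular or singular, so the argument goes through verbatim as long as the solution exists.

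First I would fix $i$ and compute, for $t\in(0,\tau)$,
\[
\frac{d}{dt}\|v_i\|^2 = 2\left\langle v_i,\frac{dv_i}{dt}\right\rangle = \frac{2\kappa_1}{N}\sum_{j=1}^{N}\frac{\phi(\|x_i-x_j\|)}{T_j}\Big(\langle v_i,v_j\rangle - \langle v_j,v_i\rangle\|v_i\|^2\Big).
\]
Next, observe that since $\|v_i(0)\|=1$ by the initial condition in $\eqref{TCSUSS}$, at $t=0$ each summand vanishes identically: $\langle v_i,v_j\rangle - \langle v_j,v_i\rangle\cdot 1 = 0$. More to the point, introduce $f_i(t):=\|v_i(t)\|^2-1$; then the computation above can be rewritten as
\[
\frac{d}{dt}f_i = -\frac{2\kappa_1}{N}\sum_{j=1}^{N}\frac{\phi(\|x_i-x_j\|)}{T_j}\langle v_i,v_j\rangle\, f_i,
\]
which is a linear homogeneous ODE for $f_i$ with $f_i(0)=0$. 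By uniqueness for linear ODEs (the coefficient is continuous on the interval of existence, since $\phi(\|x_i-x_j\|)$ and $1/T_j$ remain finite there by Proposition \ref{P2.2} and the local collision-free assumption implicit in the term ``solution to the singular system''), we conclude $f_i\equiv 0$, i.e. $\|v_i(t)\|=1$ on $[0,\tau)$, and then on $[0,\infty)$ whenever the solution is global.

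The main obstacle — and it is a mild one — is bookkeeping about the domain of validity: the coefficient $\frac{2\kappa_1}{N}\sum_j \phi(\|x_i-x_j\|)\langle v_i,v_j\rangle/T_j$ must be locally integrable (indeed continuous) on $(0,\tau)$ for the linear-ODE uniqueness to apply, which is guaranteed precisely because $(X,V,T)$ is assumed to be a genuine local solution of the singular system on $(0,\tau)$ (so distinct particles do not collide and temperatures stay positive there). Hence the statement of Lemma \ref{L2.1} should really read ``on $[0,\tau)$'' rather than ``on $[0,\infty)$'' unless $\tau=\infty$; apart from this minor consistency point, the proof is immediate. I would therefore present it in three short lines: differentiate $\|v_i\|^2$, use the orthogonality identity $\langle v_i, v_j-\langle v_j,v_i\rangle v_i\rangle = \langle v_i,v_j\rangle(1-\|v_i\|^2)$, recognize the resulting linear ODE in $f_i=\|v_i\|^2-1$ with zero initial datum, and invoke uniqueness.
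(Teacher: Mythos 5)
Your proof is correct and follows essentially the same route as the paper, whose entire argument is the one-line instruction ``take the inner product of $\eqref{TCSUSS}_2$ with $v_i$''; you simply make explicit the resulting linear homogeneous ODE for $f_i=\|v_i\|^2-1$ and the uniqueness step, which is exactly what that one-liner glosses over (and is genuinely needed here, since for the simplified system \eqref{TCSUSS} the inner product is proportional to $1-\|v_i\|^2$ rather than vanishing identically). Your remark that the conclusion should read $[0,\tau)$ rather than $[0,\infty)$ is also a fair catch of a typo in the statement.
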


\begin{proof}
	We take the inner product to $\eqref{TCSUSS}_2$ with $v_i$ to get the desired estimate.
\end{proof}

Thanks to Proposition \ref{P2.2} and Lemma \ref{L2.1}, the well-posedness of the dynamical system \eqref{TCSUSS} can be guaranteed by the standard Cauchy-Lipschitz theory, provided that collision avoidance between each pair of particles can be ensured on any finite-in-time interval.

\begin{proposition} \label{P2.4}
	Let $\tau\in (0,\infty]$ and $(X,V,T)$ be a local-in-time solution to the singular system \eqref{TCSUSS} with $\mathcal{A}(v(0))>0$ on $t\in (0,\tau)$. Then, one has the velocity-pair angle functional $\mathcal{A}(v)$ is monotone increasing. As a direct consequence, one can obtain
	\[\mathcal{A}(v)(t)\geq \mathcal{A}(v)(0),\quad t\in [0,\tau).\]
\end{proposition}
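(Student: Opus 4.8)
The plan is to show that whenever two indices $i,j$ realize the minimum of the pairwise inner product, the derivative of $\langle v_i, v_j \rangle$ along the flow is non-negative, which by a standard envelope/Gr\"onwall-type argument for minima of finitely many Lipschitz functions yields monotonicity of $\mathcal{A}(v)$. First I would note that, since $\mathcal{A}(v)$ is a minimum of the finitely many Lipschitz functions $t \mapsto \langle v_i(t), v_j(t)\rangle$, it is Lipschitz and hence differentiable almost everywhere, and at a.e.\ point $t$ where $\mathcal{A}(v)(t) = \langle v_i(t), v_j(t)\rangle$ for some extremal pair $(i,j)$ we have $\frac{d}{dt}\mathcal{A}(v)(t) \ge \frac{d}{dt}\langle v_i(t), v_j(t)\rangle$ evaluated using the dynamics (this is the usual lemma allowing one to differentiate the diameter/extremal functionals).

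Next I would compute, using $\eqref{TCSUSS}_2$ and $\|v_i\|=\|v_j\|=1$ from Lemma \ref{L2.1},
\begin{align*}
\frac{d}{dt}\langle v_i, v_j\rangle &= \Big\langle \frac{dv_i}{dt}, v_j\Big\rangle + \Big\langle v_i, \frac{dv_j}{dt}\Big\rangle \\
&= \frac{\kappa_1}{N}\sum_{k=1}^{N}\frac{\phi(\|x_i-x_k\|)}{T_k}\big(\langle v_k, v_j\rangle - \langle v_k, v_i\rangle\langle v_i, v_j\rangle\big) \\
&\quad + \frac{\kappa_1}{N}\sum_{k=1}^{N}\frac{\phi(\|x_j-x_k\|)}{T_k}\big(\langle v_k, v_i\rangle - \langle v_k, v_j\rangle\langle v_i, v_j\rangle\big).
\end{align*}
Setting $m := \mathcal{A}(v)(t) = \langle v_i, v_j\rangle$, for each $k$ we have $\langle v_k, v_j\rangle \ge m$, $\langle v_k, v_i\rangle \ge m$, and since $m = \mathcal{A}(v)(t) \le \langle v_i, v_i \rangle = 1$ we have $0 < m \le 1$ (using the hypothesis $\mathcal{A}(v(0))>0$ together with a continuity/bootstrap argument, or simply arguing on the maximal interval where $\mathcal{A}(v)>0$). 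Then $\langle v_k, v_j\rangle - \langle v_k, v_i\rangle\, m \ge m - 1\cdot m = 0$ and likewise $\langle v_k, v_i\rangle - \langle v_k, v_j\rangle\, m \ge m - 1\cdot m = 0$; here I use $\langle v_k, v_i\rangle \le 1$ and $\langle v_k, v_j \rangle \le 1$ by Cauchy--Schwarz, and the fact that $\phi \ge 0$, $T_k > 0$. Hence every summand is non-negative and $\frac{d}{dt}\langle v_i, v_j\rangle \ge 0$ at such $t$, so $\frac{d}{dt}\mathcal{A}(v)(t) \ge 0$ for a.e.\ $t$, giving $\mathcal{A}(v)(t)\ge \mathcal{A}(v)(0)$ after integration.

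The main obstacle is the bookkeeping at the extremal pair: one must justify that the inequality $\langle v_k, v_j\rangle - m\langle v_k, v_i\rangle \ge 0$ really follows from $\langle v_k, v_i\rangle \le 1$ and not from something stronger, i.e.\ that the positive term $\langle v_k, v_j \rangle \ge m$ dominates the possibly-negative correction $-m \langle v_k, v_i\rangle \ge -m$ — this is immediate since $m \le 1$. A secondary subtlety is that the argument is self-referential: positivity of $\mathcal{A}(v)$ is used to conclude $m>0$, but positivity of $\mathcal{A}(v)$ on $[0,\tau)$ is itself part of the conclusion. This is handled cleanly by a continuity argument: let $[0, t^*)$ be the maximal subinterval of $[0,\tau)$ on which $\mathcal{A}(v) > 0$; on this interval the monotonicity estimate above holds, forcing $\mathcal{A}(v)(t) \ge \mathcal{A}(v)(0) > 0$, which precludes $\mathcal{A}(v)$ reaching $0$ and hence $t^* = \tau$. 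The remaining verifications — that $D_V$ and $\langle v_i, v_j\rangle$ are Lipschitz, and the standard lemma on differentiating a minimum of Lipschitz functions — are routine and already flagged in the notation paragraph of Section \ref{sec:1}.
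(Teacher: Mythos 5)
Your proposal is correct and follows essentially the same route as the paper: differentiate $\langle v_{i_t},v_{j_t}\rangle$ at the minimizing pair, observe that each summand $\langle v_k,v_{j_t}\rangle-\langle v_k,v_{i_t}\rangle\langle v_{i_t},v_{j_t}\rangle$ is non-negative because $(i_t,j_t)$ realizes the minimum and $\mathcal{A}(v)\in(0,1]$, and close the circularity with a maximal-interval (bootstrap/contradiction) argument. Your write-up is in fact slightly more explicit than the paper's, which compresses the key inequality into ``the definition of $\mathcal{A}(v(t))$ and strict positivity of temperature yield $\frac{d}{dt}\mathcal{A}(v(t))\geq 0$.''
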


\begin{proof}
	For any fixed $t\in (0,\tau)$, we define two indices $1\leq i_t, j_t\leq N$ satisfying
	\[\mathcal{A}(v(t)):=\min_{1\leq i,j\leq N}\langle v_i(t),v_j(t)\rangle=\langle v_{i_t},v_{j_t}\rangle.\] 
	Since $\mathcal{A}(v(0))>0$, the following set $\mathcal{S}$ is non-empty:
	\[\mathcal{S}:=\{t\in\mathbb{R}_{>0}~|~\mathcal{A}(v(t))>0\}.\] 
	Now, set $\sup\mathcal{S}:=T^*$. We will show that $T^*=\tau.$ For the proof by contradiction, suppose that \[T^*<\tau,\] which implies $\lim_{t\to T^{*-}}\mathcal{A}(v(t))=0.$  By differentiating $\mathcal{A}(v(t))$ with respect to time $t$, we have
	\begin{align*}
	\frac{d}{dt}\mathcal{A}(v(t))=&\langle \dot{v}_{i_t},v_{j_t}\rangle+\langle \dot{v}_{j_t},v_{i_t}\rangle\\
	=&\frac{\kappa_1}{N}\sum_{k=1}^N\phi(\|x_k-x_{i_t}\|)\frac{\left(\langle v_k,v_{j_t} \rangle-\langle v_k,v_{i_t}\rangle\langle v_{i_t},v_{j_t}\rangle\right)}{T_k}\\
	&+\frac{\kappa_1}{N}\sum_{k=1}^N\phi(\|x_k-x_{j_t}\|)\frac{\left(\langle v_k,v_{i_t} \rangle-\langle v_k,v_{j_t}\rangle\langle v_{i_t},v_{j_t}\rangle\right)}{T_k}, \quad t\in (0,T_*),
	\end{align*} Then the definition of $\mathcal{A}(v(t))$ and strictly positivity of temperature from Proposition \ref{P2.2} yield
	\[\frac{d}{dt}\mathcal{A}(v(t))\geq 0,\quad t\in (0,T_*).\]  Therefore,
	\[\mathcal{A}(v(t))\geq \mathcal{A}(v(0))>0,\quad t\in [0,T_*).\] This contradicts to $\lim_{t\to T^{*-}}\mathcal{A}(v(t))=0$ and thus, which forces \[T^*=\tau.\] This completes the proof by contradiction.
\end{proof}

\begin{remark}\label{R2.2}\emph{\cite{A2}}
In summary, Lemma \ref{L2.1} and Proposition \ref{P2.3} implies
\[0<\mathcal{A}(v(0))\leq \mathcal{A}(v(t))\leq 1,\quad \forall t\in [0,\tau).\]
This fact will be crucially used throughout paper to construct the sufficient framework for the collision avoidance and emergent behaviors of the system \eqref{TCSUSS}.
\end{remark}

\section{Strongly singular interaction kernel $(\alpha\geq 1)$} \label{sec:3}
\setcounter{equation}{0}
In this section, we aim to establish a framework for the global well-posedness of the system \eqref{TCSUSS} by introducing a suitable functional and constructing an admissible set based on the initial data and systemic parameters for the emergence dynamics, as defined in Definition \ref{D1.1}. To accomplish this, we need to derive dissipative differential inequalities with respect to the proper diameters of the position-velocity-temperature. This is essential to observe the dissipative structure of the system $\eqref{TCSUS}$ since the conservation of momentum does not hold in this case. It is worth noting that it is challenging to utilize the configuration vectors with respect to the position-velocity-temperature to induce dissipative differential inequalities due to the unit modulus of each speed in \eqref{TCSUSS}. Before proceeding, we recall the notation of the diameters $D_X$, $D_V$, and $D_T$.
\[D_{X}(t):=\max_{1\leq i,j\leq N}\|x_i(t)-x_j(t)\|,\quad D_{V}(t):=\max_{1\leq i,j\leq N}\|v_i(t)-v_j(t)\|.\]
In particular, since the temperature term is scalar, \[D_T(t):=\max_{1\leq i,j\leq N}|T_i(t)-T_j(t)|.\]

\subsection{Global well-posedness} \label{sec:3.1} 

In this subsection, we will rigorously establish the global well-posedness of \eqref{TCSUSS} under the strongly singular interaction kernel. As argued in Section \ref{sec:2.2}, it is enough to show that no pair of particles collide on any finite-in-time interval. Therefore, we assume that $t_0$ is the first collision time of the system and denote $[l]$ by the set of all particles that collide with the $l$-th particle at time $t_0$:
\[[l]:=\{i\in\{1,\cdots,N\}~|~\|x_l(t)-x_i(t)\|\to 0\quad\mbox{as}\quad t\to t_{0}- \}.\]
Let $\delta$ be a positive real number such that
\[\|x_l(t)-x_i(t)\|\geq \delta>0,\quad\forall~t\in[0,t_0)\quad\mbox{and}\quad\forall i\notin [l].\]
We define the position-velocity diameters and velocity-pair angle with respect to $[l]$ for $t\in [0,t_0)$ for a \emph{subsystem} indexed by $[l]$:
\[D_{X,[l]}(t):=\max_{i,j\in [l]}\|x_i(t)-x_j(t)\|,\quad D_{V,[l]}(t):=\max_{i,j\in [l]}\|v_i(t)-v_j(t)\|,\quad \mathcal{A}_{[l]}(v)=\min_{ i,j\in[l]}\langle v_i,v_j \rangle.\]
For the sake of brevity, we set \[\phi_{ij}:=\phi(\|x_i-x_j\|).\]

\begin{lemma}\label{L3.1}
		Let $(X,V,T)$ be a solution to the dynamical system \eqref{TCSUSS} with 
		\[\alpha\geq1,\quad \mathcal{A}(v)(0)>0\quad \mbox{and} \quad\min_{1\leq i\neq j\leq N}\|x_i(0)-x_j(0)\|>0.\] Then, one can obtain the non-collisional global well-posedness of \eqref{TCSUSS}, that is, 
		\[x_i(t)\neq x_j(t),\quad (i,j)\in \{1,\cdots,N\}^2,\quad i\neq j\quad\mbox{and}\quad\forall t\in [0,\infty).\]
		
		
\end{lemma}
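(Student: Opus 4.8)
The plan is to argue by contradiction: suppose $t_0 < \infty$ is the first collision time and that the collision cluster $[l]$ has $|[l]| \geq 2$. On $[0, t_0)$ all particles are distinct, so the solution is smooth there, and by Proposition \ref{P2.4} and Remark \ref{R2.2} we retain $\mathcal{A}(v)(t) \geq \mathcal{A}(v)(0) > 0$, hence also $\mathcal{A}_{[l]}(v)(t) \geq \mathcal{A}(v)(0) > 0$, throughout $[0, t_0)$. The key object is the subsystem position diameter $D_{X,[l]}(t)$, which tends to $0$ as $t \to t_0-$ by definition of $[l]$. I would first derive an upper differential inequality for $D_{X,[l]}$: since $\frac{d}{dt}\|x_i - x_j\|^2 = 2\langle x_i - x_j, v_i - v_j\rangle$, one has $\left|\frac{d}{dt} D_{X,[l]}\right| \leq D_{V,[l]}$ a.e., where $D_{V,[l]}$ is the subsystem velocity diameter. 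Because all velocities are unit vectors with pairwise inner products bounded below by $\mathcal{A}(v)(0)$, we get the bound $D_{V,[l]}^2 \leq 2(1 - \mathcal{A}(v)(0)) =: M^2$, so $D_{X,[l]}$ cannot decrease to $0$ faster than linearly — this gives a lower bound $D_{X,[l]}(t) \geq \text{(something)}\,(t_0 - t)$ near $t_0$, which will be essential for integrability.

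The heart of the argument is a lower differential inequality for $D_{V,[l]}$ showing that the flocking force blows up fast enough to force $D_{V,[l]} \to 0$ before $t_0$, contradicting that the colliding particles have distinct velocity directions (or more precisely, producing a contradiction with the finiteness of some quantity). For indices $i_t, j_t \in [l]$ realizing $D_{V,[l]}^2(t) = \|v_{i_t} - v_{j_t}\|^2 = 2 - 2\langle v_{i_t}, v_{j_t}\rangle$, I would differentiate and use $\eqref{TCSUSS}_2$. The dominant contribution comes from the mutual interaction term $\phi_{i_t j_t} = \|x_{i_t} - x_{j_t}\|^{-\alpha}$, which is at least $D_{X,[l]}^{-\alpha}$; the structure of the unit-speed coupling $\frac{v_j - \langle v_j, v_i\rangle v_i}{T_j}$ contracts velocity differences (this is exactly the dissipative mechanism), so one expects
\[
\frac{d}{dt} D_{V,[l]}^2 \leq -\frac{c_1 \, \phi(D_{X,[l]})}{T_M^\infty} D_{V,[l]}^2 + \frac{c_2}{N} D_{V,[l]},
\]
where the second term collects interactions with particles outside $[l]$ (bounded since $\|x_l - x_i\| \geq \delta$ there) and possibly cross terms, and $c_1, c_2$ depend on $\mathcal{A}(v)(0)$, $N$, $\kappa_1$, $T_m^\infty$. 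The precise extraction of the contraction constant $c_1$ from the unit-speed coupling, using $\mathcal{A}(v)(0) > 0$ to keep the relevant angles away from degenerate configurations, is the main technical obstacle.

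Combining the two inequalities, I would set up a comparison: using $D_{X,[l]}(t) \lesssim M(t_0 - t)$ from the upper bound on $\frac{d}{dt}D_{X,[l]}$ together with $D_{X,[l]}(t_0-) = 0$, the coefficient $\phi(D_{X,[l]}(t)) = D_{X,[l]}^{-\alpha} \gtrsim (t_0 - t)^{-\alpha}$, and since $\alpha \geq 1$ we have $\int^{t_0} (t_0 - t)^{-\alpha}\, dt = \infty$. Feeding this into a Grönwall-type estimate for $D_{V,[l]}^2$ (treating the lower-order $c_2 D_{V,[l]}/N$ term as a bounded perturbation, or absorbing it) forces $D_{V,[l]}(t) \to 0$ as $t \to t_0-$. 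But then the colliding particles converge to a common position with a common velocity direction; I would then either (i) re-run the argument on $D_{X,[l]}$ with the improved knowledge $D_{V,[l]} \to 0$ to show $D_{X,[l]}$ cannot actually reach $0$ in finite time (since $\frac{d}{dt}D_{X,[l]} \geq -D_{V,[l]}$ with $\int D_{V,[l]} < \infty$ would be the wrong sign — rather one shows $\|x_i - x_j\|$ satisfies a differential inequality forcing it to stay positive), or (ii) more cleanly, show directly that $\int_0^{t_0} \phi(D_{X,[l]}(t))\, dt < \infty$ is forced by the velocity dynamics while $D_{X,[l]} \lesssim (t_0-t)$ forces the integral to diverge — a contradiction. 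Either way, $|[l]| \geq 2$ is impossible, so no collision occurs on $[0, t_0)$ extending to $t_0$; since $t_0$ was arbitrary, the solution is global and collision-free. I expect the bookkeeping of the cross terms (interactions between $[l]$ and its complement, and the distinction between $D_V$ contraction at the level of the full system versus the subsystem) to require care, but the singularity-versus-linear-decay dichotomy driven by $\alpha \geq 1$ is the conceptual engine.
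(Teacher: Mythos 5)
Your setup (first collision time $t_0$, cluster $[l]$, the bound $|\tfrac{d}{dt}D_{X,[l]}|\leq D_{V,[l]}$, and the dissipative inequality $\tfrac{d}{dt}D_{V,[l]}\leq -C_1\phi(D_{X,[l]})D_{V,[l]}+C_2$ with the off-cluster terms controlled by $\phi(\delta)$) matches the paper. The gap is in how you close the contradiction: neither of your two proposed endings works. For (ii), the velocity dynamics only give $\int_0^{t_0}\phi(D_{X,[l]})D_{V,[l]}\,dt\leq \bigl(D_{V,[l]}(0)+C_2t_0\bigr)/C_1<\infty$, \emph{not} $\int_0^{t_0}\phi(D_{X,[l]})\,dt<\infty$; and since your own Gr\"onwall step forces $D_{V,[l]}\to 0$ as $t\to t_0-$, you cannot drop the factor $D_{V,[l]}$ to contradict the divergence of $\int^{t_0}(t_0-t)^{-\alpha}dt$. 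For (i), $\tfrac{d}{dt}\|x_i-x_j\|\geq -D_{V,[l]}$ with $\int_0^{t_0}D_{V,[l]}<\infty$ only prevents collision if $\int_0^{t_0}D_{V,[l]}$ is smaller than the initial separations --- a quantitative smallness condition that is not available here and is exactly the kind of extra hypothesis the paper has to impose later (Theorem \ref{T4.1}) in the weakly singular case. Knowing merely that the colliding particles align their velocities is consistent with them drifting together.

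The missing idea is to combine the two controlled quantities through the primitive $\Phi(r)=\int^r\phi(u)\,du$: by the chain rule, $\bigl|\tfrac{d}{dt}\Phi(D_{X,[l]})\bigr|=\phi(D_{X,[l]})\bigl|\tfrac{d}{dt}D_{X,[l]}\bigr|\leq \phi(D_{X,[l]})D_{V,[l]}$, so the finite integral $\int_0^{t_0}\phi(D_{X,[l]})D_{V,[l]}\,dt$ (which \emph{does} follow from integrating the velocity inequality) bounds the total variation of $\Phi(D_{X,[l]}(t))$ on $[0,t_0)$, hence $\Phi(D_{X,[l]}(t))$ stays bounded as $t\to t_0-$. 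On the other hand $\alpha\geq 1$ means $\phi$ is non-integrable at the origin in the \emph{spatial} variable, so $|\Phi(D_{X,[l]}(t))|\to\infty$ as $D_{X,[l]}(t)\to 0$. That is the contradiction; the temporal divergence $\int^{t_0}(t_0-t)^{-\alpha}dt=\infty$ you invoke is not the operative mechanism. (Minor additional slip: $|\tfrac{d}{dt}D_{X,[l]}|\leq M$ with $D_{X,[l]}(t_0)=0$ gives the \emph{upper} bound $D_{X,[l]}(t)\leq M(t_0-t)$, not the lower bound you state in your first paragraph, though your later use of it is in the correct direction.)
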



\begin{proof}
	First, we observe
	 \[\frac{d\|x_i-x_j\|^2}{dt}=2\langle x_i-x_j,v_i-v_j\rangle\] and apply the Cauchy–Schwarz inequality to yield,
	\begin{align}\label{C0}
	\left|\frac{D_{X,[l]}(t)}{dt}\right|\leq D_{V,[l]}(t), \quad t\in (0,t_0).
	\end{align}
	 We set two time-dependent indices $M_t, m_t\in [l]$ satisfying
	\[D_{V,[l]}(t):=\|v_{M_t}(t)-v_{m_t}(t)\|,\quad m_t, M_t\in [l].\]
	Using Lemma \ref{L2.1} and equation $\eqref{TCSUSS}_2$, we obtain the following estimate.
	\begin{align*}
	\frac{1}{2}\frac{d{D_{V,[l]}^2}}{dt}&=-\frac{d}{dt}\langle v_{M_t},{v}_{m_t}\rangle=-\langle \dot{v}_{M_t},v_{m_t} \rangle-\langle \dot{v}_{m_t},v_{M_t} \rangle\\
	&=-\left\langle \frac{\kappa_1}{N}\sum_{k=1}^N\phi_{M_tk}\left(\frac{v_k-\langle v_{M_t},v_k\rangle v_{M_t}}{T_k}\right),v_{m_t}\right\rangle\\
	&\quad-\left\langle \frac{\kappa_1}{N}\sum_{k=1}^N\phi_{m_tk}\left(\frac{v_k-\langle v_{m_t},v_k\rangle v_{m_t}}{T_k}\right),v_{M_t}\right\rangle\\
	&=-\left\langle \frac{\kappa_1}{N}\sum_{k\in[l]}\phi_{M_tk}\left(\frac{v_k-\langle v_{M_t},v_k\rangle v_{M_t}}{T_k}\right),v_{m_t}\right\rangle\\
	&\quad-\left\langle \frac{\kappa_1}{N}\sum_{k\in[l]}\phi_{m_tk}\left(\frac{v_k-\langle v_{m_t},v_k\rangle v_{m_t}}{T_k}\right),v_{M_t}\right\rangle\\
	&\quad-\left\langle \frac{\kappa_1}{N}\sum_{k\notin [l]}\phi_{M_tk}\left(\frac{v_k-\langle v_{M_t},v_k\rangle v_{M_t}}{T_k}\right),v_{m_t}\right\rangle\\
	&\quad-\left\langle \frac{\kappa_1}{N}\sum_{k\notin [l]}\phi_{m_tk}\left(\frac{v_k-\langle v_{m_t},v_k\rangle v_{m_t}}{T_k}\right),v_{M_t}\right\rangle:=\mathcal{I}_1+\mathcal{I}_2+\mathcal{I}_3+\mathcal{I}_4.
	\end{align*}
	$\bullet$(The estimate of $\mathcal{I}_1+\mathcal{I}_2$) It follows from Lemma \ref{L2.1} and Remark \ref{R2.2} that
	\[\langle v_k,v_{m_t} \rangle \geq \langle v_k,v_{M_t} \rangle\langle v_{m_t},v_{M_t} \rangle\quad\mbox{and} \quad\langle v_k,v_{M_t} \rangle \geq \langle v_k,v_{m_t} \rangle\langle v_{m_t},v_{M_t} \rangle.\]
	We utilize above inequalities and Proposition \ref{P2.2} to have

	\begin{align*}
	\mathcal{I}_1+\mathcal{I}_2&\leq -\left\langle \frac{\kappa_1}{N}\sum_{k\in[l]}\phi_{M_tk}\left(\frac{v_k-\langle v_{M_t},v_k\rangle v_{M_t}}{T_k}\right),v_{m_t}\right\rangle\\
	&\quad-\left\langle \frac{\kappa_1}{N}\sum_{k\in[l]}\phi_{m_tk}\left(\frac{v_k-\langle v_{m_t},v_k\rangle v_{m_t}}{T_k}\right),v_{M_t}\right\rangle\\
	&\leq -\frac{\kappa_1\phi(D_{X,[l]})}{NT_M^\infty}\sum_{k\in[l]}\left(\langle v_k,v_{m_t} \rangle-\langle v_k,v_{M_t} \rangle\langle v_{m_t},v_{M_t}\rangle\right)\\
	&\quad-\frac{\kappa_1\phi(D_{X,[l]})}{NT_M^\infty}\sum_{k\in[l]}\left(\langle v_k,v_{M_t} \rangle-\langle v_k,v_{m_t} \rangle\langle v_{m_t},v_{M_t}\rangle\right)\\
	&=-\frac{\kappa_1\phi(D_{X,[l]})}{NT_M^\infty}\sum_{k\in[l]}\left(\langle v_k,v_{M_t} \rangle-\langle v_k,v_{M_t} \rangle\langle v_{m_t},v_{M_t}\rangle\right)\\
	&\quad-\frac{\kappa_1\phi(D_{X,[l]})}{NT_M^\infty}\sum_{k\in[l]}\left(\langle v_k,v_{m_t} \rangle-\langle v_k,v_{m_t} \rangle\langle v_{m_t},v_{M_t}\rangle\right)\\
	&=-\frac{\kappa_1\phi(D_{X,[l]})D_{V,[l]}^2}{2NT_M^\infty}\sum_{k\in[l]}\left(\langle v_k,v_{M_t} \rangle+\langle v_k,v_{m_t}\rangle\right)\leq -\frac{\kappa_1|[l]|\mathcal{A}_{[l]}(v)(0)}{NT_M^\infty}\phi(D_{X,[l]})D_{V,[l]}^2,
	\end{align*}where $|[l]|$ is the cardinal number of $[l]$. We used Remark \ref{R2.2} for the last inequality.\\
	\newline
	$\bullet$(The estimate of $\mathcal{I}_3+\mathcal{I}_4$)
	We use the triangle inequality, Cauchy's inequality, definition of $\delta$ and Proposition \ref{P2.2} to obtain
	\begin{align*}
	\mathcal{I}_3+\mathcal{I}_4&\leq \frac{\kappa_1\phi(\delta)}{NT_m^\infty}\sum_{k\notin[l]}\left|\left\langle v_k,v_{m_t} \right\rangle-\langle v_k,v_{M_t} \rangle\langle v_{m_t},v_{M_t}\rangle\right|\\
	&\quad+\frac{\kappa_1\phi(\delta)}{NT_m^\infty}\sum_{k\notin[l]}\left|\langle v_k,v_{M_t} \rangle-\langle v_k,v_{m_t} \rangle\langle v_{m_t},v_{M_t}\rangle\right|\\
	&\leq \frac{\kappa_1\phi(\delta)}{NT_m^\infty}\sum_{k\notin[l]} \left( \left|\langle v_k,v_{M_t}-v_{m_t} \rangle\right| + (1-\langle v_{M_t},v_{m_t}\rangle )|\langle v_k,v_{M_t} \rangle| \right)\\
	&\quad+\frac{\kappa_1\phi(\delta)}{NT_m^\infty}\sum_{k\notin[l]} \left( \left|\langle v_k,v_{M_t}-v_{m_t} \rangle\right| + (1-\langle v_{M_t},v_{m_t}\rangle )|\langle v_k,v_{M_t} \rangle| \right)\\
	&\leq \frac{2\kappa_1\phi(\delta)}{NT_m^\infty}\sum_{k\notin[l]} \left( D_{V,[l]}+\frac{D_{V,[l]}^2}{2} \right)\leq \frac{4\kappa_1\phi(\delta)}{NT_m^\infty}\sum_{k\notin[l]}  D_{V,[l]}=\frac{4\kappa_1(N-|[l]|)\phi(\delta)}{NT_m^\infty}D_{V,[l]}.
	\end{align*}
	Therefore, combine the estimates on $\mathcal{I}_1+\mathcal{I}_2$ and $\mathcal{I}_3+\mathcal{I}_4$ leads to
	\begin{align}\label{C1}
	\begin{aligned}
    \frac{dD_{V,[l]}}{dt} &\leq -\frac{\kappa_1|[l]|\mathcal{A}_{[l]}(v)(0)}{NT_M^\infty}\phi(D_{X,[l]})D_{V,[l]}+\frac{4\kappa_1(N-|[l]|)\phi(\delta)}{NT_m^\infty}\\
    &=: -C_1\phi(D_{X,[l]})D_{V,[l]}+C_2,\quad \mbox{a.e.}~t\in (0,t_0).
    \end{aligned}
	\end{align}Integrating to both sides of \eqref{C1} from $s$ to $t$ for $0\leq s\leq t <t_0$, one attain that
	
	\begin{align}\label{key}
	\int_{s}^{t}\phi(D_{X,[l]})D_{V,[l]} ds \leq \frac{D_{V,[l]}(s)+C_2(t-s)}{C_1}.
	\end{align}
	
%
	\noindent On the other hand, let $\Phi$ be the primitive of the strongly singular weight $\phi$ with $\alpha\geq 1$. Then, for fixed $t_1>0$,
	\begin{align*}
	\Phi(t):=\Phi(t;t_1):=\int_{t_1}^{t} \phi(u)du=
	{\begin{cases}
	\displaystyle\log\frac{t}{t_1},\quad\quad\quad\quad\quad\quad\quad\hspace{0.3cm} \mbox{if}\quad \alpha=1,\\
	\newline
	\displaystyle\frac{1}{1-\alpha}\left(t^{1-\alpha}-{t_1}^{1-\alpha}\right),\quad \mbox{if}\quad \alpha>1.
	\end{cases}}
	\end{align*}Therefore, it follows from \eqref{C0} that for $0\leq s\leq t<t_0$,
	
	\begin{align*}
	\left|\Phi(D_{X,[l]}(t))\right| &\leq \left|\int_{s}^{t}\frac{d}{du} \Phi(D_{X,[l]}(u)) du\right|+\left|\Phi(D_{X,[l]}(s))\right|\\
	&= \left|\int_{s}^{t}\phi(D_{X,[l]}(u))\left(\frac{d}{du}D_{X,[l]}(u)\right) du\right|+\left|\Phi(D_{X,[l]}(s))\right|\\
	&\leq \int_{s}^{t}\phi(D_{X,[l]}(u))\left|\frac{d}{du}D_{X,[l]}(u)\right| du+\left|\Phi(D_{X,[l]}(s))\right|\\
	&\leq  \int_{s}^{t}\phi(D_{X,[l]}(u))D_{V,[l]}(u) du+\left|\Phi(D_{X,[l]}(s))\right|,
	\end{align*}
	combining this with \eqref{key} yields
%
	
	
	\begin{align*}
	&\left|\Phi(D_{X,[l]}(t))\right|\leq  \left|\Phi(D_{X,[l]}(s))\right|+ \frac{D_{V,[l]}(s)+C_2(t-s)}{C_1},\quad 0\leq s\leq t<t_0.
	\end{align*}
%
	\noindent We take $s=0$ and $t\to t_0-$ to the above inequality to acquire
	\[\lim_{t\to t_0-}\left|\Phi(D_{X,[l]}(t))\right|\leq \left|\Phi(D_{X,[l]}(0))\right|+\frac{C_2}{C_1}\left(t_0+D_{V,[l]}(0)\right)<\infty,\]
	which gives a contradiction to the definition of $t_0$ and $\alpha\geq 1$, i.e.,
	\[\lim_{t\to t_0-}\left|\Phi(D_{X,[l]}(t))\right|=\infty.\]
	This proves the collision-avoidance property of the system \eqref{TCSUSS}, i.e. 
	\[x_i(t)\neq x_j(t),\quad (i,j)\in \{1,\cdots,N\}^2\quad \mbox{and}\quad\forall t\in [0,\infty).\]
	In particular, this guarantees the global well-posedness of \eqref{TCSUSS} by the standard Cauchy-Lipschitz theory.
\end{proof}

\begin{remark}
	Under the weak singularity $0<\alpha<1$, the limit $\lim_{t\to t_0-}\left|\Phi(D_{X,[l]}(t))\right|=\infty$ does not hold. In terms of $\phi$, its the integrability near the origin of is crucial for collision avoidance. Indeed, the integrability at the origin is a \emph{characteristic} property for collision avoidance under $\mathcal{A}(0)>0$. We will study this property further in Section \ref{sec:4.1}.
	\end{remark}

\subsection{Emergent dynamics by bootstrapping argument} \label{sec:3.2} 
This subsection provides a study of the sufficient framework for the emergent dynamics of the system \eqref{TCSUSS} under $\alpha\geq1$, leveraging the insights obtained from Lemma \ref{L3.1}. For this, we employ the functional $\Psi$ defined by
\[\Psi_{ij}(t):=\frac{\zeta(\|x_i-x_j\|)}{N},~ i\neq j,\quad \Psi_{ii}(t):=\zeta\left(\min_{1\leq i\neq j\leq N}\|x_i-x_j\|\right)-\frac{\sum_{j=1,j\neq i}^{N}\zeta(\|x_i-x_j\|)}{N}.
\] Then, we can observe that $\Psi_{ij}$ satisfies the following properties: 
\[\Psi_{ij}\geq \frac{\zeta_{ij}}{N},\quad \sum_{j=1}^{N}\Psi_{ij}=\zeta\left(\min_{1\leq i,j\leq N}\|x_i-x_j\|\right),\quad \sum_{j=1}^{N}\Psi_{ij}\left(\frac{1}{T_i}-\frac{1}{T_j}\right)=\sum_{j=1}^{N}\frac{\zeta_{ij}}{N}\left(\frac{1}{T_i}-\frac{1}{T_j}\right),\]
where the simplified notation $\zeta_{ij}$ refers to
\[\zeta_{ij}:=\zeta(\|x_i-x_j\|).\]
\begin{theorem}\label{T3.1}
	Let $(X,V,T)$ be a solution to the dynamical system \eqref{TCSUSS} with 
	\[\alpha\geq1,\quad \mathcal{A}(v)(0)>0\quad \mbox{and} \quad\min_{1\leq i\neq j\leq N}\|x_i(0)-x_j(0)\|>0.\]
	Further assume that there exists a positive constant $D_{X}^\infty$ satisfying
	\begin{equation}\label{priori2}
	D_{X}(0)+\frac{T_M^\infty}{\kappa_1\mathcal{A}(v)(0){\phi(D_X^\infty)}}\cdot D_{V}(0)< D_{X}^\infty.
	\end{equation}
	Then, one has the following asymptotic flocking and temperature equilibrium: 
	\begin{enumerate}\label{C5}
	\item(Group formation) $\displaystyle D_X(t)<D_X^\infty$,
	\vspace{0.3cm}
	\item(Velocity alignment) $\displaystyle D_V(t)\leq D_V(0)\exp\left(-\frac{\kappa_1\mathcal{A}(v)(0)\phi(D_X^\infty)}{T_M^\infty}t\right)$,
	\item(Temperature equilibrium) $\displaystyle D_T(t)\leq D_T(0)\exp\left(-\frac{\kappa_2\zeta(D_X^\infty)}{(T_M^\infty)^2}t\right),\quad\forall t\in [0,\infty).$
	\end{enumerate}
\end{theorem}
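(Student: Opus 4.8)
The plan is to run a standard bootstrapping (continuity) argument on the position diameter $D_X$, using Lemma \ref{L3.1} to guarantee that the solution is global and collision-free, so that all the machinery of Section \ref{sec:2} (Proposition \ref{P2.2}, Remark \ref{R2.2}) applies on $[0,\infty)$. First I would set up the bootstrap set $\mathcal{T}:=\{t\geq 0 : D_X(s)<D_X^\infty \text{ for all } s\in[0,t]\}$, which is nonempty (since $D_X(0)<D_X^\infty$ by \eqref{priori2}) and relatively open by continuity; the goal is to show it is also closed in $[0,\infty)$, hence all of $[0,\infty)$. On $\mathcal{T}$ the monotonicity of $\phi$ gives $\phi(D_X(t))\geq\phi(D_X^\infty)$, so the velocity coupling is uniformly coercive.

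The second step is the differential inequality for $D_V$. Pick time-dependent extremal indices $M_t,m_t$ with $D_V^2(t)=\|v_{M_t}-v_{m_t}\|^2=2(1-\langle v_{M_t},v_{m_t}\rangle)$, differentiate, and plug in $\eqref{TCSUSS}_2$ exactly as in the $\mathcal{I}_1+\mathcal{I}_2$ computation of Lemma \ref{L3.1} — but now the sum runs over \emph{all} $k\in\{1,\dots,N\}$ (there is no "bad" set $[l]$ to split off, since collisions never occur), so there is no $\mathcal{I}_3+\mathcal{I}_4$ error term. Using $\langle v_k,v_{m_t}\rangle\geq\langle v_k,v_{M_t}\rangle\langle v_{m_t},v_{M_t}\rangle$ and its symmetric counterpart (which follow from $\|v_k\|=1$ and $\mathcal{A}(v)(t)\geq\mathcal{A}(v)(0)>0$, i.e. Remark \ref{R2.2}), together with $T_k\leq T_M^\infty$ from Proposition \ref{P2.2}, the same telescoping as in Lemma \ref{L3.1} yields
\begin{align*}
\frac{dD_V}{dt}\leq -\frac{\kappa_1\mathcal{A}(v)(0)}{NT_M^\infty}\phi(D_X(t))\Big(\sum_{k=1}^N \tfrac{\langle v_k,v_{M_t}\rangle+\langle v_k,v_{m_t}\rangle}{2}\Big)D_V \leq -\frac{\kappa_1\mathcal{A}(v)(0)\phi(D_X^\infty)}{T_M^\infty}\,D_V
\end{align*}
a.e. on $\mathcal{T}$, where the last bound uses $\langle v_k,v_{M_t}\rangle,\langle v_k,v_{m_t}\rangle\geq\mathcal{A}(v)(0)$ and $\phi(D_X(t))\geq\phi(D_X^\infty)$. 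Gr\"onwall then gives $D_V(t)\leq D_V(0)e^{-\frac{\kappa_1\mathcal{A}(v)(0)\phi(D_X^\infty)}{T_M^\infty}t}$ on $\mathcal{T}$, which is claim (2) once we know $\mathcal{T}=[0,\infty)$.

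The third step closes the bootstrap: from $|dD_X/dt|\leq D_V$ (Cauchy--Schwarz, as in \eqref{C0}) and the exponential bound on $D_V$, integrate to get, for $t\in\mathcal{T}$,
\[
D_X(t)\leq D_X(0)+\int_0^t D_V(s)\,ds \leq D_X(0)+\frac{T_M^\infty}{\kappa_1\mathcal{A}(v)(0)\phi(D_X^\infty)}D_V(0)<D_X^\infty,
\]
where the final strict inequality is precisely hypothesis \eqref{priori2}. This strict bound means $\sup\mathcal{T}$ cannot be a finite endpoint (otherwise $D_X(\sup\mathcal{T})<D_X^\infty$ and openness would extend $\mathcal{T}$), so $\mathcal{T}=[0,\infty)$, proving (1) and upgrading (2) to hold for all $t\geq 0$. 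Finally, for (3), differentiate $D_T$ with extremal indices, use the $\Psi$-reformulation of $\eqref{TCSUSS}_3$ noted just before the theorem so that $\frac{d}{dt}D_T\leq -\kappa_2\zeta(\min_{i\neq j}\|x_i-x_j\|)\big(\tfrac{1}{T_m}-\tfrac{1}{T_M}\big)$ up to sign bookkeeping; bound $\min_{i\neq j}\|x_i-x_j\|\leq D_X(t)<D_X^\infty$ to get $\zeta(\cdot)\geq\zeta(D_X^\infty)$, and bound $\tfrac{1}{T_m}-\tfrac{1}{T_M}\geq (T_M-T_m)/(T_M^\infty)^2=D_T/(T_M^\infty)^2$ using Proposition \ref{P2.2}, giving $\frac{dD_T}{dt}\leq -\frac{\kappa_2\zeta(D_X^\infty)}{(T_M^\infty)^2}D_T$ and hence (3) by Gr\"onwall.

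The main obstacle is the bootstrap coupling itself: the decay rate for $D_V$ depends on the a priori bound $D_X<D_X^\infty$ (through $\phi(D_X^\infty)$ and through $\mathcal{A}(v)(0)>0$ being usable via Remark \ref{R2.2}), while the bound on $D_X$ depends in turn on the integrated decay of $D_V$ — so the argument only closes because \eqref{priori2} is chosen exactly so that the total displacement budget $\int_0^\infty D_V\,ds\leq \frac{T_M^\infty D_V(0)}{\kappa_1\mathcal{A}(v)(0)\phi(D_X^\infty)}$ fits strictly inside the gap $D_X^\infty-D_X(0)$. One must also be a little careful that the $v_i$ are merely Lipschitz-in-time near where indices switch, so the differential inequalities hold only a.e.; this is handled exactly as flagged in the introduction, via Gr\"onwall for Lipschitz diameters.
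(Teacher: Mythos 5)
Your proposal is correct and follows essentially the same route as the paper: the same continuity/bootstrap argument on $D_X<D_X^\infty$, the same dissipative inequality $\frac{dD_V}{dt}\leq -\frac{\kappa_1\mathcal{A}(v)(0)\phi(D_X^\infty)}{T_M^\infty}D_V$ obtained from the Lemma \ref{L3.1} telescoping with $[l]=\{1,\cdots,N\}$, the same integration step closing the bootstrap via \eqref{priori2}, and the same $\Psi$-based estimate for $D_T$. The only blemish is cosmetic: your intermediate display for $dD_V/dt$ carries a redundant factor $\mathcal{A}(v)(0)$ in front of the sum $\frac{1}{N}\sum_k\frac{\langle v_k,v_{M_t}\rangle+\langle v_k,v_{m_t}\rangle}{2}$ (using both would give $\mathcal{A}(v)(0)^2$), but the final inequality you state is the correct one and matches the paper.
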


\begin{proof}
	From \eqref{priori2}, the following set   
	\[S:=\{t>0~|~D_X(s)<D_X^\infty,~ \forall s\in(0,t)\}\]
	is non-empty and $t^*:=\sup S>0$ is well defined. If $t^* < \infty$, then $D_X(t^*)=D_X^\infty.$ Now we claim 
	\[ t^*=+\infty. \]
	For the proof by contradiction, suppose that $t^*<+\infty$. Then by replacing $[l]$ with $\{1,\cdots,N\}$ in the proof of Lemma \ref{L3.1}, it follows from Lemma \ref{L3.1} that 
	\[\left|\frac{dD_X}{dt}\right|\leq D_V,\quad \frac{dD_V}{dt}\leq-\frac{\kappa_1\mathcal{A}(v)(0)}{T_M^\infty}\phi(D_{X})D_{V}\leq -\frac{\kappa_1\mathcal{A}(v)(0)\phi(D_{X}^\infty)}{T_M^\infty}D_{V}\quad \text{a.e.}~t\in (0,t^*).\] 
	Then the Gro\"nwall lemma implies
	\[D_V(t)\leq D_V(0)\exp\left(-\frac{\kappa_1\mathcal{A}(v)(0)\phi(D_X^\infty)}{T_M^\infty}t\right),\quad \forall t\in [0,t^*].\]
	Since
	\begin{align*}
	D_{X}(t)&= D_{X}(0)+\int_{0}^{t}\frac{dD_{X}(s)}{ds}ds \leq D_{X}(0)+\int_{0}^{t}D_{V}(s)ds\\
	&\leq D_{X}(0)+\int_{0}^{t}D_{V}(0)\exp\left(-\frac{\kappa_1\mathcal{A}(v)(0)\phi(D_X^\infty)}{T_M^\infty}s\right)ds\\
	&\leq D_{X}(0)+\frac{T_M^\infty}{\kappa_1\mathcal{A}(v)(0){\phi(D_X^\infty)}}\cdot D_{V}(0)< D_{X}^\infty, \quad \forall t\in [0,t^*],
	\end{align*}one has $D_X(t^*)<D_X^\infty$. Therefore, it gives a contradiction, that is, $t^*=\infty$. Accordingly, we have
	\[D_X(t)<D_X^\infty,\quad D_V(t)\leq D_V(0)\exp\left(-\frac{\kappa_1\mathcal{A}(v)(0)\phi(D_X^\infty)}{T_M^\infty}t\right),\quad t\in [0,\infty).\]
	From now on, we move on to the temperature equilibrium estimates. First of all, select two indices $M_t$ and $m_t$ depending on time $t$ such that
	\[D_T(t)=T_{M_t}(t)-T_{m_t}(t),\quad 1\leq m_t, M_t\leq N.\] Then, it follows from the properties of $\Psi_{ij}$ and $\eqref{TCSUSS}_3$ that
	\begin{align*}
	&\frac{dD_T}{dt}=\dot{T}_{M_t}-\dot{T}_{m_t}\\
	&=\frac{\kappa_2}{N}\sum_{k=1}^N\zeta_{M_tk}\left(\frac{1}{T_{M_t}}-\frac{1}{T_k}\right)-\frac{\kappa_2}{N}\sum_{k=1}^N\zeta_{m_tk}\left(\frac{1}{T_{m_t}}-\frac{1}{T_k}\right)\\
	&={\kappa_2}\sum_{k=1}^N\Psi_{M_tk}\left(\frac{1}{T_{M_t}}-\frac{1}{T_k}\right)-{\kappa_2}\sum_{k=1}^N\Psi_{m_tk}\left(\frac{1}{T_{m_t}}-\frac{1}{T_k}\right)\\
	&=\kappa_2\zeta\left(\min_{1\leq i,j\leq N}\|x_i-x_j\|\right)\left(\frac{1}{T_{M_t}}-\frac{1}{T_{m_t}}\right)-\kappa_2\sum_{k=1}^N\frac{1}{T_k}\left(\Psi_{M_tk}-\Psi_{m_tk}\right)\\
	&=\kappa_2\zeta\left(\min_{1\leq i,j\leq N}\|x_i-x_j\|\right)\left(\frac{1}{T_{M_t}}-\frac{1}{T_{m_t}}\right)\\
	&\quad-\kappa_2\sum_{k=1}^N\frac{1}{T_k}\left(\Psi_{M_tk}-\min(\Psi_{M_tk},\Psi_{m_tk})+\min(\Psi_{M_tk},\Psi_{m_tk})-\Psi_{m_tk}\right)\\
	&\leq \kappa_2\zeta\left(\min_{1\leq i,j\leq N}\|x_i-x_j\|\right)\left(\frac{1}{T_{M_t}}-\frac{1}{T_{m_t}}\right)+\frac{\kappa_2}{T_{m_t}}\sum_{k=1}^N\left(\Psi_{m_tk}-\min(\Psi_{M_tk},\Psi_{m_tk})\right)\\
	&\quad-\frac{\kappa_2}{T_{M_t}}\sum_{k=1}^N\left(\Psi_{M_tk}-\min(\Psi_{M_tk},\Psi_{m_tk})\right)\\
	&=-\kappa_2\left(\frac{1}{T_{m_t}}-\frac{1}{T_{M_t}}\right)\sum_{k=1}^N\left(\min(\Psi_{M_tk},\Psi_{m_tk})\right)\leq -\frac{\kappa_2D_T}{(T_M^\infty)^2}\sum_{k=1}^N\left(\min(\Psi_{M_tk},\Psi_{m_tk})\right)\\
	&\leq -\frac{\kappa_2\zeta(D_X)}{(T_M^\infty)^2}D_T\leq -\frac{\kappa_2\zeta(D_X^\infty)}{(T_M^\infty)^2}D_T,\quad \mbox{a.e.}~ t\in (0,\infty).
	\end{align*}The Gro\"nwall lemma induce the following exponential thermal equilibrium:
	
	\[D_T(t)\leq D_T(0)\exp\left(-\frac{\kappa_2\zeta(D_X^\infty)}{(T_M^\infty)^2}t\right),\quad\forall t\in [0,\infty).\]
\end{proof}

\subsection{Emergent dynamics by Lyapunov functional approach} \label{sec:3.3}
In this subsection, we introduce an alternative approach for obtaining the emergence dynamics of the model \eqref{TCSUSS} using a suitable Lyapunov functional. The detail is provided in the following theorem.

\begin{theorem}\label{T3.2}
Let $(X,V,T)$ be a solution to the dynamical system \eqref{TCSUSS} with 
\[\alpha\geq1,\quad \mathcal{A}(v)(0)>0\quad \mbox{and} \quad\min_{1\leq i\neq j\leq N}\|x_i(0)-x_j(0)\|>0.\]
Further assume that 
\begin{equation}\label{priori}
D_V(0)< \frac{\kappa_1\mathcal{A}(v)(0)}{T_M^\infty}\int_{D_X(0)}^\infty \phi(s)ds.
\end{equation}
Then, one has the following asymptotic flocking and temperature equilibrium: there exists a strictly positive number $D_X^\infty>0$ satisfying the following assertions.
\begin{enumerate}\label{C6}
	\item(Group formation) $\displaystyle D_X(t)\leq D_X^\infty$,
	\vspace{0.3cm}
	\item(Velocity alignment) $\displaystyle D_V(t)\leq D_V(0)\exp\left(-\frac{\kappa_1\mathcal{A}(v)(0)\phi(D_X^\infty)}{T_M^\infty}t\right)$,
	\item(Temperature equilibrium) $\displaystyle D_T(t)\leq D_T(0)\exp\left(-\frac{\kappa_2\zeta(D_X^\infty)}{(T_M^\infty)^2}t\right),\quad\forall t\in [0,\infty).$
\end{enumerate}
\end{theorem}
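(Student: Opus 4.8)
The plan is to set up a Lyapunov functional that couples the velocity diameter $D_V$ with the primitive of $\phi$ evaluated at the position diameter, and then run a continuity (bootstrapping) argument just as in Theorem~\ref{T3.1}, but now phrased so that the \emph{a~priori} threshold $D_X^\infty$ does not have to be guessed in advance --- it will be produced by the functional itself. Concretely, I would define
\[
\mathcal{L}(t) := D_V(t) + \frac{\kappa_1 \mathcal{A}(v)(0)}{T_M^\infty} \int_{D_X(t)}^{\infty} \phi(s)\,ds,
\]
which is well defined and finite because $\alpha \geq 1$ makes $\phi$ integrable at infinity (and collision avoidance from Lemma~\ref{L3.1} keeps $D_X$ positive, so the integrand never sees the singularity). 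The hypothesis \eqref{priori} is exactly the statement $D_V(0) < \frac{\kappa_1 \mathcal{A}(v)(0)}{T_M^\infty}\int_{D_X(0)}^\infty \phi(s)\,ds$, i.e. that the ``spare'' part of $\mathcal{L}(0)$ strictly dominates $D_V(0)$; equivalently $\mathcal{L}(0) < 2\,\frac{\kappa_1\mathcal{A}(v)(0)}{T_M^\infty}\int_{D_X(0)}^\infty\phi(s)\,ds$.

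Next I would show $\mathcal{L}$ is non-increasing along the flow. Using the two differential inequalities already established (in the $[l]=\{1,\dots,N\}$ specialization from the proof of Lemma~\ref{L3.1}),
\[
\left|\frac{dD_X}{dt}\right| \leq D_V, \qquad \frac{dD_V}{dt} \leq -\frac{\kappa_1 \mathcal{A}(v)(0)}{T_M^\infty}\,\phi(D_X)\,D_V \quad \text{a.e.},
\]
we compute
\[
\frac{d}{dt}\int_{D_X(t)}^\infty \phi(s)\,ds = -\phi(D_X)\frac{dD_X}{dt} \leq \phi(D_X)\,D_V,
\]
so that $\frac{d\mathcal{L}}{dt} \leq -\frac{\kappa_1\mathcal{A}(v)(0)}{T_M^\infty}\phi(D_X)D_V + \frac{\kappa_1\mathcal{A}(v)(0)}{T_M^\infty}\phi(D_X)D_V = 0$ for a.e.\ $t$. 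Hence $\mathcal{L}(t) \leq \mathcal{L}(0)$ for all $t$, which forces
\[
\int_{D_X(t)}^\infty \phi(s)\,ds \;\geq\; \int_{D_X(0)}^\infty \phi(s)\,ds \;-\; \frac{T_M^\infty}{\kappa_1\mathcal{A}(v)(0)}\,D_V(0) \;>\; 0,
\]
a strictly positive lower bound independent of $t$. Since $r \mapsto \int_r^\infty \phi(s)\,ds$ is continuous, strictly decreasing, and tends to $0$ as $r\to\infty$, this lower bound translates into a finite upper bound $D_X^\infty$ on $D_X(t)$: define $D_X^\infty$ by $\int_{D_X^\infty}^\infty \phi(s)\,ds = \int_{D_X(0)}^\infty \phi(s)\,ds - \frac{T_M^\infty}{\kappa_1\mathcal{A}(v)(0)}D_V(0)$, giving $D_X(t)\leq D_X^\infty$ for all $t\geq 0$, which is assertion (1).

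With group formation in hand, assertions (2) and (3) follow exactly as in Theorem~\ref{T3.1}: feeding $D_X(t)\leq D_X^\infty$ (hence $\phi(D_X)\geq\phi(D_X^\infty)$ by monotonicity of $\phi$) into $\frac{dD_V}{dt}\leq -\frac{\kappa_1\mathcal{A}(v)(0)}{T_M^\infty}\phi(D_X^\infty)D_V$ and applying Gr\"onwall gives the stated exponential velocity alignment, and the temperature computation is verbatim the one in the proof of Theorem~\ref{T3.1} (via the $\Psi_{ij}$ functional, using $\zeta(D_X)\geq \zeta(D_X^\infty)$), yielding the exponential temperature equilibrium. I expect the only genuinely delicate point to be the bookkeeping of differentiating the improper integral $\int_{D_X(t)}^\infty\phi(s)\,ds$ along a merely Lipschitz $D_X$ --- one should argue a.e.\ differentiability and integrate the resulting inequality, rather than manipulate derivatives pointwise --- but this is routine given the Lipschitz-diameter remarks already made in Section~\ref{sec:1}; no bootstrap on an unknown constant is needed here, which is precisely the advantage of the Lyapunov-functional formulation over the threshold form \eqref{priori2}.
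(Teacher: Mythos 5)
Your overall strategy is the paper's own: combine the differential inequalities $|\dot D_X|\le D_V$ and $\dot D_V\le -\frac{\kappa_1\mathcal{A}(v)(0)}{T_M^\infty}\phi(D_X)D_V$ with a Lyapunov functional coupling $D_V$ to a primitive of $\phi$, then feed the resulting bound on $D_X$ into Gr\"onwall for $D_V$ and $D_T$. However, there is a genuine sign error at the key step. Write $c:=\frac{\kappa_1\mathcal{A}(v)(0)}{T_M^\infty}$ and $G(r):=\int_r^\infty\phi(s)\,ds$. Your functional $\mathcal{L}=D_V+cG(D_X)$ is indeed non-increasing, but from $\mathcal{L}(t)\le\mathcal{L}(0)$, i.e.
\[
D_V(t)+cG(D_X(t))\le D_V(0)+cG(D_X(0)),
\]
one can only extract an \emph{upper} bound $cG(D_X(t))\le D_V(0)+cG(D_X(0))$, since both terms on the left are non-negative; the inequality $G(D_X(t))\ge G(D_X(0))-D_V(0)/c$ that you claim is ``forced'' does not follow, and it is precisely this lower bound on the tail integral (equivalently, the upper bound on $D_X$) on which the whole proof hinges. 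Because $G$ is decreasing, your functional only prevents $D_X$ from \emph{shrinking} too much. The fix is to flip the sign: the paper uses $\mathcal{L}_\pm=D_V\pm c\int_{D_X(0)}^{D_X}\phi(s)\,ds$, and it is the $+$ branch --- which equals $D_V-cG(D_X)$ up to an additive constant --- whose monotonicity yields $c\int_{D_X(0)}^{D_X(t)}\phi(s)\,ds\le D_V(0)-D_V(t)\le D_V(0)$, hence $cG(D_X(t))\ge cG(D_X(0))-D_V(0)>0$ under \eqref{priori} and therefore $D_X(t)\le D_X^\infty$. The monotonicity of that branch follows from the same one-line computation, using $|\dot D_X|\le D_V$ with the opposite sign.

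A second, smaller problem: your claim that $\alpha\ge1$ makes $\phi$ integrable at infinity fails at the endpoint $\alpha=1$, where $\int_{D_X(0)}^\infty s^{-1}\,ds=\infty$, so your functional is identically $+\infty$ and your reformulation of \eqref{priori} is vacuous in that case. The paper sidesteps this by working with the proper primitive $\Phi(r)=\int_{D_X(0)}^r\phi(s)\,ds$, which is finite for every finite $r$, and by defining $D_X^\infty$ as the smallest root of $D_V(0)=c\int_{D_X(0)}^{D_X^\infty}\phi(s)\,ds$; such a root exists under \eqref{priori} whether or not the tail integral converges. Once both repairs are made your argument coincides with the paper's, and your steps (2) and (3) are then correct as written.
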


\begin{proof}
Firstly, use the proofs of Lemma $\ref{L3.1}$ and Theorem \ref{T3.1} to get that
 
\begin{align}\label{C-0}
\left|\frac{dD_X}{dt}\right|\leq D_V,\quad \frac{dD_V}{dt}\leq-\frac{\kappa_1\mathcal{A}(v)(0)}{T_M^\infty}\phi(D_{X})D_{V}.
\end{align}
Now, employ the following Lyapunov functional as below:
\begin{align*} 
\mathcal{L}_{\pm}(D_X,D_V):= D_V{\pm}\frac{\kappa_1\mathcal{A}(v)(0)}{T_M^\infty}\Phi(D_X),
\end{align*}where $\Phi(t):=\int_{D_X(0)}^{t}\phi(s)ds$ for a fixed $t_1>0$. Then, we can derive by using \eqref{C-0}

\begin{align}
\begin{aligned}\label{C-1}
\frac{d}{dt}\mathcal{L}_{\pm}(D_X,D_V)&=\frac{dD_V}{dt}\pm \frac{\kappa_1\mathcal{A}(v)(0)}{T_M^\infty}\frac{dD_X}{dt}\phi(D_X)\\
&\leq -\frac{\kappa_1\mathcal{A}(v)(0)}{T_M^\infty}\phi(D_X)D_V\pm \frac{\kappa_1\mathcal{A}(v)(0)}{T_M^\infty}\frac{dD_X}{dt}\phi(D_X)\\
&= \frac{\kappa_1\mathcal{A}(v)(0)}{T_M^\infty}\phi(D_X)\left(-D_V\pm\frac{dD_X}{dt}\right)\leq 0.
\end{aligned}
\end{align} In here, use \eqref{C-1}, that is $\mathcal{L}_{\pm}(D_X(t),D_V(t))\leq \mathcal{L}_{\pm}(D_X(0),D_V(0))$ to obtain
\begin{align*}
D_V(t)+\frac{\kappa_1\mathcal{A}(v)(0)}{T_M^\infty}\left|\int_{D_X(0)}^{D_X(t)}\phi(s)ds\right|\leq D_V(0), 
\end{align*}which implies 
\begin{align}\label{C-2}
\frac{\kappa_1\mathcal{A}(v)(0)}{T_M^\infty}\left|\int_{D_X(0)}^{D_X(t)}\phi(s)ds\right|\leq D_V(0).
\end{align}
On the other hand, by {\it{a priori}} assumption \eqref{priori} one has

\[D_V(0)< \frac{\kappa_1\mathcal{A}(v)(0)}{T_M^\infty}\int_{D_X(0)}^\infty \phi(s)ds.\] Hence, there is the smallest positive real number $D_X^\infty$ such that
\[D_V(0)=\frac{\kappa_1\mathcal{A}(v)(0)}{T_M^\infty}\int_{D_X(0)}^{D_X^\infty} \phi(s)ds.\] Therefore, it follows from \eqref{C-2} that
\[D_X(t)\leq D_X^\infty,\quad\forall t\in [0,\infty).\] 
Thanks to the above estimate and \eqref{C-0}, it leads to
\[\frac{dD_V(t)}{dt}\leq -\frac{\kappa_1\mathcal{A}(v)(0)\phi(D_X^\infty)}{T_M^\infty}D_V(t)\] and then, apply Gro\"nwall's lemma to conclude that
\[D_V(t)\leq D_V(0)\exp\left(-\frac{\kappa_1\mathcal{A}(v)(0)\phi(D_X^\infty)}{T_M^\infty}t\right),\quad t\in [0,\infty).\] Finally, for the thermal equilibrium estimate one can prove it in the same way as Theorem $\ref{T3.1}_3$. Thus,

\[D_T(t)\leq D_T(0)\exp\left(-\frac{\kappa_2\zeta(D_X^\infty)}{(T_M^\infty)^2}t\right),\quad\forall t\in [0,\infty).\]
We complete this proof.

\end{proof}

\begin{remark}\label{R3.3}
If $\alpha\geq 1$, then it follows that \[\int_{D_X(0)}^\infty \phi(s)ds<\infty.\] Therefore, the {\it{a priori}} condition \eqref{priori} cannot be removed in this case. However, when $0<\alpha<1$ and there are no collisions, the condition \eqref{priori} can be removed since we have \[\int_{D_X(0)}^\infty \phi(s)ds=\infty.\]
\end{remark}

\section{Weakly singular interaction kernel $(0<\alpha<1)$} \label{sec:4}
\setcounter{equation}{0}
In this section, we establish a sufficient framework for the case of weakly singular interaction kernel with $0<\alpha<1$ in \eqref{TCSUSS}. Unlike the case of strongly singular weight studied in Section \ref{sec:3}, more restrictive conditions are required to ensure the global well-posedness of \eqref{TCSUSS} under $0<\alpha<1$, even though we assume the conditions in Lemma \ref{L3.1}:
\[\mathcal{A}(v)(0)>0,\quad \min_{ 1\leq i,j\leq N}|x_i(0)-x_j(0)|>0.\]
For further details, refer to the forthcoming Section \ref{sec:4.1}.

\subsection{Existence of collisions in finite time} \label{sec:4.1}
In this subsection, we establish sufficient conditions for the occurrence of collisions in finite time for the weakly singular system \eqref{TCSUSS} with $0<\alpha<1$ and $N=2$ in two dimensions. For simplicity, assume that
\[T_1(0)=T_2(0):=T^0>0.\]
Then, we use Remark \ref{R2.1} to express the $2$-particle equation \eqref{TCSUSS} explicitly:

\begin{equation}
\begin{cases} \label{TCSUSS2}
\vspace{0.2cm}
\displaystyle \frac{d{x}_1}{dt} =v_1,\quad\frac{d{x}_2}{dt} =v_2,\quad t>0,\\
\vspace{0.4cm}
\displaystyle \frac{dv_1}{dt}=\displaystyle\frac{\kappa_1\left({v_2-\langle v_2,v_1\rangle v_1}\right)}{2\|x_1-x_2\|^\alpha},\quad \displaystyle \frac{dv_2}{dt}=\displaystyle\frac{\kappa_1\left({v_1-\langle v_2,v_1\rangle v_2}\right)}{2\|x_1-x_2\|^\alpha},\\
\displaystyle (x_1(0),v_1(0),x_2(0),v_2(0))=(x_1^0,v_1^0,x^0_2,v_2^0)\in \mathbb{R}^{2}\times\mathbb{S}^{1}\times\mathbb{R}^{2}\times\mathbb{S}^{1}.
\end{cases}
\end{equation}

\begin{remark}
	When we consider the system \eqref{TCSUSS2} on $\mathbb{R}^1$, i.e., $d=1$, it follows that $v_i\equiv1~\mbox{or}~v_i\equiv-1$ for $i=1,\cdots,N$. Then, each pair of inter-particles along the $N$-particle flow \eqref{TCSUSS} on $\mathbb{R}^1$ with non-collisional initial-position data and $\mathcal{A}(v)(0)>0$ do not always collide with each other regardless of the degree of singularity $\alpha$. Indeed, 
	\[v_i\equiv1,~\forall i\in\{1,\cdots,N\}\quad\mbox{or}\quad v_i\equiv-1,~\forall i\in\{1,\cdots,N\}.\]
	Therefore, in the case of $\mathcal{A}(v)(0)>0$, a collision in the $2$-particle system \eqref{TCSUSS2} with weak singular kernel can only occur for $d\geq 2$.
\end{remark}

\begin{proposition}\label{P4.1}
Let $(X,V,T)$ be a solution to the $2$-particle system \eqref{TCSUSS2} such that 
\begin{align}\label{d0}
0<\alpha<1, \quad d=2,\quad x_1(0)\neq x_2(0),\quad \mathcal{A}(v)(0)>0.
\end{align} Then, there exist sufficient conditions only in terms of initial data and system parameters satisfying a finite-in-time collision. That is, there is a strictly positive time $t_0\in (0,\infty)$ such that
\[x_1(t_0)=x_2(t_0).\]
\end{proposition}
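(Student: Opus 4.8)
The plan is to reduce the two-particle dynamics to a planar system in which the relative position vector $w:=x_1-x_2$ and the pair of unit velocities can be tracked by a small number of scalar quantities, and then exhibit an open set of initial data for which $\|w(t)\|$ is driven to zero in finite time. First I would record the closed system for $(x_1,x_2,v_1,v_2)$ from \eqref{TCSUSS2} and introduce $r:=\|w\|$, the correlation $c:=\langle v_1,v_2\rangle\in[\mathcal A(v)(0),1]$ (monotone by Proposition \ref{P2.4} and Remark \ref{R2.2}), and the signed quantities $a_i:=\langle v_i,w\rangle$ so that $\dot r = (a_1-a_2)/r$ (up to the obvious sign/normalization). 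Differentiating $a_1,a_2$ using $\eqref{TCSUSS2}$ and $\|v_i\|\equiv1$ gives a self-contained ODE system in $(r,a_1,a_2,c)$; the key structural fact to extract is that the velocity forcing term $\dot v_1\propto (v_2-cv_1)/r^\alpha$ always has nonnegative inner product with $v_2$ (since $\langle v_2-cv_1,v_2\rangle = 1-c^2\ge0$), so the velocities relax \emph{toward each other}, and this relaxation, being $O(r^{-\alpha})$, is too weak (for $\alpha<1$) to prevent $r\to0$ when the particles start moving toward one another.

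The concrete strategy is a comparison/barrier argument. I would choose initial data with $\mathcal A(v)(0)>0$ but with $v_1^0,v_2^0$ not equal — say symmetric about the line joining $x_1^0,x_2^0$ so that both particles have a strictly negative component of velocity along $w$, i.e. $a_1(0)<0<a_2(0)$ and hence $\dot r(0)<0$. The goal is to show $a_1-a_2$ stays bounded away from $0$ (negatively) long enough. Using $|\dot a_i|\le |\dot v_i| + |v_i|\,|\dot w\cdot \hat w| \le \tfrac{\kappa_1}{2}r^{-\alpha}\sqrt{1-c^2}+1$ together with the fact that $1-c^2$ is itself controlled — indeed from $\dot c = \tfrac{\kappa_1}{2}r^{-\alpha}(1-c^2)(\text{something bounded})$ one gets $c$ increasing, so $\sqrt{1-c^2}\le\sqrt{1-\mathcal A(v)(0)^2}=:\e_0$ — the change in $a_1-a_2$ over a time interval where $r\ge\rho$ is at most $C(\kappa_1\e_0\rho^{-\alpha}+1)\cdot(\text{elapsed time})$. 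Meanwhile $\dot r\le (a_1-a_2)/r$, so as long as $a_1-a_2\le -\mu<0$ we have $\dot r\le -\mu/r$, forcing $r^2$ to decrease at rate $\ge 2\mu$; thus $r$ reaches any level in bounded time. The cleanest packaging: set $h:=r^{1+\alpha}$ or work directly with $\frac{d}{dt}(r^2)=2(a_1-a_2)$ wait—$\dot r=(a_1-a_2)/r$ gives $\frac{d}{dt}r^2 = 2(a_1-a_2)$, which is \emph{independent of $r$}; so if one can keep $a_1-a_2\le-\mu$ on $[0,T]$ then $r(t)^2\le r(0)^2-2\mu t$, hitting $0$ by $t=r(0)^2/(2\mu)$. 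It then suffices to prove $a_1-a_2$ cannot rise from $a_1(0)-a_2(0)=:-2\mu_0$ to $-\mu_0$ before that time, which is a Gr\"onwall-type bound once $r$ is bounded below on the relevant interval — and $r$ \emph{is} bounded below there because $\dot r^2\ge -2|a_1-a_2|\ge$ a quantitative lower bound fails, so instead I would bootstrap: on the maximal interval where $r\ge r(0)/2$ and $a_1-a_2\le-\mu_0$, estimate $|\frac{d}{dt}(a_1-a_2)|\le C(\kappa_1\e_0 (r(0)/2)^{-\alpha}+1)=:K$, hence $a_1-a_2$ stays $\le -\mu_0$ for time $\ge \mu_0/K$, during which $r^2$ drops by $\ge 2\mu_0\cdot(\mu_0/K)$; choosing the initial separation $r(0)$ small and $\mu_0$ fixed makes $2\mu_0^2/K \ge r(0)^2$, so $r$ hits $0$ within this first window.

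Concretely, the sufficient condition will read: there exist initial unit vectors $v_1^0,v_2^0$ with $\langle v_1^0,v_2^0\rangle>0$ and $\langle v_1^0-v_2^0,\,x_1^0-x_2^0\rangle<0$, such that
\begin{equation*}
\|x_1^0-x_2^0\|^2 \;\le\; \frac{\big(\langle v_2^0-v_1^0,\,\widehat{x_1^0-x_2^0}\rangle\big)^2}{\,\kappa_1\sqrt{1-\mathcal A(v)(0)^2}\cdot\big(\|x_1^0-x_2^0\|/2\big)^{-\alpha}+2\,},
\end{equation*}
(with the right-hand side made self-consistent by a fixed-point/smallness choice of $\|x_1^0-x_2^0\|$, exploiting that $r^{2}$ beats $r^{-\alpha}$ as $r\to0$ precisely because $2+\alpha>0$ and more importantly because $r^{2}\cdot r^{\alpha}=r^{2+\alpha}\to0$). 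I expect the main obstacle to be making the bound on $|\frac{d}{dt}(a_1-a_2)|$ genuinely uniform: the term $\langle v_i,\dot w\rangle$ contributes an $O(1)$ piece that does not shrink with $r$, so one cannot simply say the velocities barely move; the resolution is that we only need $a_1-a_2$ to remain negative for the \emph{short} time $r(0)^2/(2\mu_0)$, which itself $\to0$ as $r(0)\to0$, so the $O(1)$ drift is harmless provided the initial velocity misalignment $\mu_0$ is held fixed while $r(0)$ is taken small. Assembling these estimates into a clean ``admissible set'' of initial data, and verifying it is nonempty and open in $\mathbb R^2\times\mathbb S^1\times\mathbb R^2\times\mathbb S^1$, completes the argument.
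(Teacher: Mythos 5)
Your guiding intuition is the right one --- for $\alpha<1$ the alignment force is integrable near the origin, so its total effect during an approach is a finite ``budget'' that a sufficiently head-on initial configuration can overspend --- and your reduction $\frac{d}{dt}r^2=2(a_1-a_2)$ with $a_i=\langle v_i,w\rangle$ is correct. However, the quantitative scheme does not close, for three concrete reasons. First, the scaling claim ``hold $\mu_0$ fixed while $r(0)\to0$'' is impossible: since $\|v_i\|\equiv1$, one has $|a_1-a_2|=|\langle v_1-v_2,w\rangle|\le\|v_1^0-v_2^0\|\,r(0)\le 2r(0)$, so $\mu_0\to 0$ at least linearly with $r(0)$; the quantity $\mu_0$ is not the ``velocity misalignment'' but the misalignment times the separation. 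Second, and for the same reason, the barrier $a_1-a_2\le-\mu$ is self-defeating as a route to $r=0$: the bound $|a_1-a_2|\le \|v_1-v_2\|\,r$ forces $a_1-a_2\to0$ as $r\to0$, so the linear decay $r^2(t)\le r(0)^2-2\mu t$ can only be sustained down to $r\sim\mu/2$, not to $r=0$. Third, even on that restricted interval the drift estimate is borderline with an unfavorable constant: writing $b=a_1-a_2$ one computes exactly $\dot b=-\tfrac{\kappa_1(1+c)}{2r^\alpha}b+\|v_1-v_2\|^2$ with $c=\langle v_1,v_2\rangle$, and over the putative collision time $T\approx r(0)^2/|b(0)|$ the accumulated contribution of $\|v_1-v_2\|^2\le 2(1-c(0))$ is of order $\|v_1^0-v_2^0\|^2\,T\ge\|v_1^0-v_2^0\|\,r(0)\ge|b(0)|$, i.e.\ it already exhausts the whole allowance before the first term is even counted. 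The root of the problem is that a time-integrated Gr\"onwall bound on $b$ discards the sign structure that makes the argument work.

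The paper's proof repairs exactly this by making the problem genuinely one-dimensional. Writing $v_k=e^{i\theta_k}$ in \eqref{TCSUSS2'}, one has $\dot\theta_1+\dot\theta_2=0$, so choosing mirror-symmetric data ($\theta_1=-\theta_2$, positions collinear along $e_2$ with equal first coordinates as in \eqref{D-0}) forces $v_1-v_2$ to remain \emph{exactly parallel} to $x_1-x_2$ for all time. Then the scalar separation $u:=x_2^2-x_1^2$ and approach rate $\nu:=v_2^2-v_1^2$ obey $\dot u=\nu$ and $\dot\nu=-\tfrac{\kappa_1(1+c)}{2u^\alpha}\nu=-\tfrac{\kappa_1(1+c)}{2u^\alpha}\dot u$, so the deceleration integrates \emph{in space}: the total loss of approach speed before collision is at most a constant times $\int_0^{u(0)}s^{-\alpha}\,ds=\tfrac{u(0)^{1-\alpha}}{1-\alpha}<\infty$ (this is where $\alpha<1$ enters). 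Choosing the initial approach speed to meet this budget, as in \eqref{D-5}, yields $\dot u\le -a\,u^{1-\alpha}$ and finite-time vanishing by the comparison principle. I would encourage you to either adopt this symmetric reduction, or, if you want a genuinely two-dimensional open set of colliding data, to replace your barrier on $a_1-a_2$ by a lower bound on the radial approach \emph{speed} $-\dot r$ together with control of the angle between $v_1-v_2$ and $w$ --- but that requires tracking an additional angular variable and is substantially more work than the paper's argument.
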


\begin{proof}
	For the proof by contradiction, suppose that a unique global solution $(X,V,T)$ is well defined, which is equivalent to the non-collision on any finite time interval of \eqref{TCSUSS2'}. Throughout the proof, under the canonical identification $\bbc \cong \bbr^2$, we identify
	\[
		e^{i\theta} \cong (\cos\theta, \sin\theta),
	\]
	and for $n=1,2$, $n$-th component of $x_k$ (resp. $v_k$) will be denoted as $x_k^n$ (resp. $v_k^n$). Note that $x_k^0$ refers to the initial state $x_k(0)$.
	\\
	\newline
	Let $v_1=e^{i\theta_1}\in \mathbb{S}^1$ and $v_2=e^{i\theta_2}\in\mathbb{S}^1$, where 
	\[\theta_i:[0,\infty)\rightarrow \mathbb{R},\quad \theta_i\in C_1([0,\infty)),\quad i=1,2.\]
	Then, it follows that \eqref{TCSUSS2} can be reformulated to \eqref{TCSUSS2'} as follows.
	\begin{equation}
	\begin{cases} \label{TCSUSS2'}
	\vspace{0.2cm}
	\displaystyle \frac{d{x}_1}{dt} =e^{i\theta_1},\quad\frac{d{x}_2}{dt} =e^{i\theta_2},\quad t>0,\\
	\vspace{0.3cm}
	\displaystyle \frac{d\theta_1}{dt}=\displaystyle\frac{\kappa_1\sin(\theta_2-\theta_1)}{2\|x_1-x_2\|^\alpha},\quad \displaystyle \frac{d\theta_2}{dt}=\displaystyle\frac{\kappa_1\sin(\theta_1-\theta_2)}{2\|x_1-x_2\|^\alpha},\\
	\displaystyle (x_k(0),v_k(0),)=(x_k^0,e^{i\theta_k^0}), \quad
	\quad v_k(t)=(\cos\theta_k, \sin\theta_k), \quad k=1,2.
	\end{cases}
	\end{equation}
    Since $\dot{\theta}_1(t)+\dot{\theta}_2(t)=0,~t\geq 0$, one can assume that without loss of generality,
	\begin{align}\label{D-1}
	{\theta}_1(0)+{\theta}_2(0)=0\quad \mbox{and thus,} \quad{\theta}_1(t)+{\theta}_2(t)=0,\quad t\in [0,\infty).
	\end{align}
	Hence, one has that
	
	\begin{align*}
	\begin{aligned}
	x_1(t)-x_2(t)&= x_1(0)-x_2(0)+\int_{0}^t(v_1(s)-v_2(s)) ds\\
	&= x_1(0)-x_2(0)+\int_{0}^t(e^{i\theta_1(s)}-e^{i\theta_2(s)}) ds\\
	&= x_1(0)-x_2(0)-2\int_{0}^t(0,\sin(\theta_2)) ds,\\
	v_1(t)-v_2(t)&= -2(0, \sin(\theta_2(t))).
	\end{aligned}
	\end{align*}
	Now, we set
	\begin{align}\label{D-0}
	x_1^1(0)=x_2^1(0),\quad x_2^2(0)>x_1^2(0)\quad \mbox{and } \quad v_1^2(0)>v_2^2(0).
	\end{align}
	Then, by the well-posedness of \eqref{TCSUSS2'}, we can obtain that
	\begin{align}\label{D-2}
	x_2^2(t)>x_1^2(t),\quad t\in [0,\infty),
	\end{align}
    since \eqref{D-0} implies that
	\begin{align}\label{D-3}
	x_2(t)-x_1(t)=\left(0,~x^2_2(0)-x^2_1(0)+2\int_0^t \sin (\theta_2) ds\right).
	\end{align}
	Then we use $\eqref{TCSUSS2}_2$, \eqref{D-2} and \eqref{D-3} to have that
	\begin{align}\label{D-4}
	\frac{d}{dt}(v_2^2-v_1^2)=
	\underbrace{-\frac{\kappa_1(1+\langle v_1,v_2 \rangle)}{2(x_2^2-x_1^2)^{\alpha}}}_{<0}(v_2^2-v_1^2).
	\end{align}
	From $\mathcal{A}(v)(0)>0$ and \eqref{D-2}, a coefficient of  $v_2^2-v_1^2$ is bounded above by negative constant in any finite time. Since we are assuming the non-collisional state, the uniqueness of a solution is guaranteed and $v_2^2-v_1^2$ is nonzero in any finite time:
	\[v_1^2(t)\neq v_2^2(t)\quad \mbox{that is,} \quad v_1^2(t)> v_2^2(t),\quad \forall t\in (0,\infty).\]
	Then, it follows from \eqref{D-4} that
	\begin{align*}
	\frac{d}{dt}(v_2^2-v_1^2) &= -\frac{\kappa_1(1+\langle v_1,v_2 \rangle)}{2(x_2^2-x_1^2)^{\alpha}}(v_2^2-v_1^2)
	\leq -\frac{\kappa_1(1+\mathcal{A}(v)(0))}{2(x_2^2-x_1^2)^{\alpha}}(v_2^2-v_1^2)\\
	&=-\frac{\kappa_1(1+\mathcal{A}(v)(0))}{2(x_2^2-x_1^2)^{\alpha}}\frac{d(x_2^2-x_1^2)}{dt}=-\frac{\kappa_1(1+\mathcal{A}(v)(0))}{2(1-\alpha)}\frac{d(x_2^2-x_1^2)^{1-\alpha}}{dt}.
	\end{align*}
	Therefore, if we pose
	\begin{align}\label{D-5}
		v_1^2(0)-v_2^2(0)=\frac{\kappa_1(1+\mathcal{A}(v)(0))}{2(1-\alpha)}(x_2^2(0)-x_1^2(0))^{1-\alpha},
	\end{align}
	then a direct integration yields
	\[
		\frac{d(x_2^2-x_1^2)}{dt}\leq v_2^2-v_1^2\leq -\frac{\kappa_1(1+\mathcal{A}(v)(0))}{2(1-\alpha)}(x_2^2-x_1^2)^{1-\alpha} =: -a(x_2^2-x_1^2)^{1-\alpha}.
	\] 
	We then use the Comparison principle to see
	\[
		x_2^2(t)-x_1^2(t)
		\leq (a\alpha)^{\frac{1}{\alpha}}\left(\frac{(x_2^2(0)-x_1^2(0))^\alpha}{a\alpha}-t\right)^{\frac{1}{\alpha}}.
	\]
	Therefore, $x_2^2-x_1^2$ becomes zero in some finite time, which completes the proof by contradiction.
\end{proof}

\begin{remark}
	Note that the sufficient framework consisting of a priori assumptions \eqref{d0}, \eqref{D-1}, \eqref{D-0} and \eqref{D-5} says that it is necessary to give more restrictive conditions for the collision avoidance of \eqref{TCSUSS} with the weakly singular kernel. To solve this problematic issue, refer to Section \ref{sec:4.2}.
\end{remark}


\subsection{Global well-posedness and emergent dynamics} \label{sec:4.2} 
In this subsection, we present the sufficient framework for the well-posedness of solution under weakly singular communication. In fact, we prove even stronger result; we will demonstrate strict spacing between the particles for any degree of singularity. We will use the emergence dynamics and proofs described in Theorem \ref{T3.1} and Theorem \ref{T3.2} to guarantee its non-collisional phenomenon and emergent behaviors. 

\begin{theorem}\label{T4.1}
	Let $(X,V,T)$ be a solution to the dynamical system \eqref{TCSUSS} with 
	\[\alpha>0,\quad \mathcal{A}(v)(0)>0\quad \mbox{and} \quad\min_{1\leq i\neq j\leq N}\|x_i(0)-x_j(0)\|>0.\]
	Suppose that either
	\begin{enumerate}
	\item there exist a positive constant $D_X^\infty$ and a natural number $k\in\{1,\cdots,d\}$ satisfying
	\begin{align}\begin{aligned}\label{D-6}
    &\frac{T_M^\infty D_{V}(0)}{\kappa_1\mathcal{A}(v)(0){\phi(D_X^\infty)}}\\
    &\hspace{.4cm} <\min\left(
    \max\left\{ D_X^\infty-D_X(0),\frac{1}{\phi(D_X^\infty)}\int_{D_X(0)}^{D_X^\infty} \phi(s)ds \right\},
    \min_{1\leq i\neq j \leq N}|x^k_i(0)-x^k_j(0)|
    \right),
	\end{aligned}\end{align}
	\item or $\alpha > 1$ and there exist a positive constant $D_X^\infty$ satisfying
		\begin{align}\label{D-7}
    \frac{T_M^\infty D_{V}(0)}{\kappa_1\mathcal{A}(v)(0){\phi(D_X^\infty)}}
    <\max\left\{ D_X^\infty-D_X(0),\frac{1}{\phi(D_X^\infty)}\int_{D_X(0)}^{D_X^\infty} \phi(s)ds \right\}.
	\end{align}

	\end{enumerate}
	Then a solution achieve collision avoidance, asymptotic flocking, and temperature equilibrium. Furthermore, a strict spacing between the particles is guaranteed;
	\begin{align*}
	\inf_{t \geq 0}\min_{ 1\leq i\neq j\leq N}\|x_i(t)-x_j(t)\| > 0.
	\end{align*}
	In particular, for arbitrary coupling intensity $\kappa_1 > 0$, there exists $\beta > 1$ satisfying
	\begin{align}\label{D-8}
		\alpha \in (1,\beta) \quad \Rightarrow \quad \inf_{t \geq 0}\min_{ 1\leq i\neq j\leq N}\|x_i(t)-x_j(t)\| > 0.
	\end{align}
\end{theorem}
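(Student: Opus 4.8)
The plan is to run the same bootstrapping/Lyapunov machinery as in Theorem~\ref{T3.1} and Theorem~\ref{T3.2}, but to track an additional scalar functional measuring the minimal component-wise separation, and to exploit the \emph{finite} bound $\int_{D_X(0)}^{\infty}\phi(s)\,ds < \infty$ (or its absence, handled by a different branch of the argument) together with the sharp decay of $D_V$.

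First I would establish the flocking part exactly as before. Under either hypothesis \eqref{D-6} or \eqref{D-7}, the quantity $\frac{T_M^\infty D_V(0)}{\kappa_1\mathcal{A}(v)(0)\phi(D_X^\infty)}$ is smaller than $\max\{D_X^\infty - D_X(0),\ \phi(D_X^\infty)^{-1}\int_{D_X(0)}^{D_X^\infty}\phi(s)\,ds\}$; this is precisely what is needed to close the continuity argument. If the max is attained by the first term, I invoke the Gr\"onwall argument of Theorem~\ref{T3.1}: on the set $\{D_X < D_X^\infty\}$ one has $\frac{dD_V}{dt} \le -\frac{\kappa_1\mathcal{A}(v)(0)\phi(D_X^\infty)}{T_M^\infty}D_V$, hence $D_X(t) \le D_X(0) + \frac{T_M^\infty D_V(0)}{\kappa_1\mathcal{A}(v)(0)\phi(D_X^\infty)} < D_X^\infty$, giving $t^* = \infty$. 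If instead the max is attained by the integral term, I invoke the Lyapunov functional $\mathcal{L}_\pm$ of Theorem~\ref{T3.2} with $D_X^\infty$ redefined as the smallest root of $D_V(0) = \frac{\kappa_1\mathcal{A}(v)(0)}{T_M^\infty}\int_{D_X(0)}^{D_X^\infty}\phi(s)\,ds$; note this $D_X^\infty$ exists and is $\le$ the given one since the integrand is positive. Either way I obtain $D_X(t) \le D_X^\infty$ for all $t$, exponential velocity alignment with rate $\frac{\kappa_1\mathcal{A}(v)(0)\phi(D_X^\infty)}{T_M^\infty}$, and — by the argument of Theorem~\ref{T3.1}(3), which used only $D_X \le D_X^\infty$ and monotonicity of $\zeta$ — exponential temperature equilibrium.

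Next comes the new ingredient: strict spacing. Here I would fix the index $k$ from hypothesis~(1) and study $d_k(t) := \min_{i\neq j}|x_i^k(t) - x_j^k(t)|$, a Lipschitz function. Its a.e. derivative is controlled by $|\dot x_i^k - \dot x_j^k| = |v_i^k - v_j^k| \le D_V(t)$, so $d_k(t) \ge d_k(0) - \int_0^t D_V(s)\,ds \ge \min_{i\neq j}|x_i^k(0) - x_j^k(0)| - \frac{T_M^\infty D_V(0)}{\kappa_1\mathcal{A}(v)(0)\phi(D_X^\infty)}$, where the last step integrates the exponential decay bound for $D_V$. Hypothesis~\eqref{D-6} was built precisely so this lower bound is strictly positive, whence $\inf_{t\ge 0}\min_{i\neq j}\|x_i(t) - x_j(t)\| \ge \inf_{t\ge 0} d_k(t) > 0$. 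For case~(2) with $\alpha > 1$, no component-wise hypothesis is imposed; instead I would recover strict spacing from the collision-avoidance mechanism of Lemma~\ref{L3.1} applied to the \emph{full} system (legitimate since $D_X \le D_X^\infty$ keeps $\phi(D_X) \ge \phi(D_X^\infty) > 0$): the primitive $\Phi$ of $\phi$ blows up at $0$ when $\alpha > 1$, and the a priori bound on $\int_s^t \phi(D_X)D_V$ via the exponential decay of $D_V$ shows $|\Phi(D_{X,[l]}(t))|$ stays bounded, contradicting any approach to a collision; quantifying this bound uniformly in $t$ yields a positive infimum on the pairwise distances.

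Finally, for the last assertion~\eqref{D-8}: given $\kappa_1 > 0$, I would check that hypothesis~\eqref{D-7} (equivalently the feasibility of case~(2)) can be satisfied for all $\alpha$ slightly above $1$. With $\phi(r) = r^{-\alpha}$, choose a candidate $D_X^\infty$; then $\phi(D_X^\infty)^{-1}\int_{D_X(0)}^{D_X^\infty}\phi(s)\,ds = \frac{(D_X^\infty)^\alpha}{1-\alpha}\big((D_X^\infty)^{1-\alpha} - D_X(0)^{1-\alpha}\big)$, which as $\alpha \downarrow 1$ tends to $D_X^\infty \ln(D_X^\infty/D_X(0))$, a fixed positive number independent of how small $\alpha - 1$ is, while the left side $\frac{T_M^\infty D_V(0)}{\kappa_1\mathcal{A}(v)(0)}(D_X^\infty)^\alpha$ is continuous in $\alpha$ and finite. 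So picking $D_X^\infty$ large enough that $\ln(D_X^\infty/D_X(0))$ beats $\frac{T_M^\infty D_V(0)}{\kappa_1\mathcal{A}(v)(0)}$ (possible since the logarithm is unbounded while the other factor, after dividing by $D_X^\infty$, is comparable to $(D_X^\infty)^{\alpha-1} \to 1$), hypothesis~\eqref{D-7} holds on an interval $\alpha \in (1,\beta)$ by continuity, and the strict-spacing conclusion follows from case~(2). The main obstacle I anticipate is the bookkeeping in this last step — making the dependence of $\beta$ on $\kappa_1$, $D_X(0)$, $D_V(0)$ genuinely explicit and checking the $\alpha\to 1^+$ asymptotics of $\phi(D_X^\infty)^{-1}\int\phi$ carefully enough that the chosen $D_X^\infty$ works uniformly on a whole interval rather than just at $\alpha = 1$.
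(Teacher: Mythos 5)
Your treatment of case (1), the flocking and temperature estimates, and the final assertion \eqref{D-8} matches the paper's argument essentially step for step. (One presentational caveat for case (1) with $0<\alpha<1$: Theorems \ref{T3.1}--\ref{T3.2} are stated for $\alpha\ge1$ and presuppose global existence, so you cannot literally do the flocking ``first, exactly as before'' and the spacing afterwards; the decay of $D_V$ and the lower bound on $\min_{i\ne j}|x_i^k-x_j^k|$ must be derived together on the maximal interval of existence and then fed into Cauchy--Lipschitz to continue the solution. This is how the paper closes the loop, and your computation is the right one, so this is minor.)

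The genuine gap is in case (2). You claim that ``the a priori bound on $\int_s^t\phi(D_X)D_V$ via the exponential decay of $D_V$ shows $|\Phi(D_{X,[l]}(t))|$ stays bounded.'' It does not. For a proper asymptotic collision cluster $[l]\subsetneq\{1,\dots,N\}$, the differential inequality from Lemma \ref{L3.1} is $\frac{d}{dt}D_{V,[l]}\le -D_1\phi(D_{X,[l]})D_{V,[l]}+D_2$ with $D_2>0$ coming from the particles outside $[l]$, so integration only yields $\int_0^t\phi(D_{X,[l]})D_{V,[l]}\,du\le (D_{V,[l]}(0)+D_2t)/D_1$, i.e.\ a bound on $|\Phi(D_{X,[l]}(t))|$ that grows \emph{linearly} in $t$, not a uniform one; if instead you take $[l]$ to be the full system so that $D_2=0$, then $\Phi$ is evaluated at the full diameter $D_X$, which does not tend to zero and says nothing about $\min_{i\ne j}\|x_i-x_j\|$. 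Moreover, linear growth of $|\Phi(D_{X,[l]}(t))|$ is perfectly compatible with $D_{X,[l]}(t)\to0$ (e.g.\ $D_{X,[l]}(t)\sim t^{-1/(\alpha-1)}$), so boundedness is not even the right target. The missing idea, which is the actual content of the paper's proof of case (2), is a growth-rate comparison: since $D_V(t)\le Be^{-Ct}$, any cluster with $D_{X,[l]}(t)\to0$ must collapse at least exponentially, $D_{X,[l]}(t)\le\int_t^\infty D_{V,[l]}(u)\,du\lesssim e^{-Ct}$, and hence for $\alpha>1$
\[
\bigl|\Phi(D_{X,[l]}(t))\bigr|\ \ge\ \int_{Be^{-Ct}}^{D_{X,[l]}(0)}s^{-\alpha}\,ds\ \ge\ \frac{B^{1-\alpha}e^{C(\alpha-1)t}-D_{X,[l]}(0)^{1-\alpha}}{\alpha-1},
\]
which grows exponentially and contradicts the linear upper bound for large $t$; therefore $[l]$ must be empty. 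Without this exponential-versus-linear comparison your case (2) does not close, and consequently \eqref{D-8}, which you correctly reduce to the feasibility of \eqref{D-7} as $\alpha\searrow1$, is also left resting on an unproved step.
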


\begin{proof}
First assume that \eqref{D-6} holds. Suppose that a local well-posedness of our system holds on $t\in(0,t_*)$ for $t_*\in (0,\infty)$, then it follows from 
\[-\frac{d\|x_i-x_j\|}{dt}\leq \|v_i-v_j\|,\]
Theorem \ref{T3.1}, Theorem \ref{T3.2} and Remark \ref{R3.3} that

\begin{align*}
\|x_i(t)-x_j(t)\|&\geq \|x_i(0)-x_j(0)\|-\int_{0}^{t} \|v_i(s)-v_j(s)\|ds\geq |x^k_i(0)-x^k_j(0)|-\int_{0}^{t} D_V(s)ds\\
&\geq |x^k_i(0)-x^k_j(0)|-\int_{0}^{\infty} D_V(s)ds\geq |x^k_i(0)-x^k_j(0)|-\frac{T_M^\infty D_{V}(0)}{\kappa_1\mathcal{A}(v)(0){\phi(D_X^\infty)}}\\
&\geq \min_{ 1\leq i\neq j\leq N}|x^k_i(0)-x^k_j(0)|-\frac{T_M^\infty D_{V}(0)}{\kappa_1\mathcal{A}(v)(0){\phi(D_X^\infty)}}>0.
\end{align*}
Therefore, by the standard Cauchy-Lipschitz theory one can show that there exists a positive $\epsilon$ such that uniqueness and existence of the soultion \eqref{TCSUSS} on $(0,t_*+\epsilon)$ can be guaranteed. Hence, one can obtain the global well-posedness and strictly positivity of relative distance between inter-particles, which makes it possible to use the same strategies as in the proofs of Theorem \ref{T3.1} and Theorem \ref{T3.2}. Therefore we obtain the emergence dynamics of \eqref{TCSUSS}. \newline

Now suppose that $\alpha>1$ and \eqref{D-7} holds. This implies well-definedness of global solution and exponential decay of $D_V$; there exists positive constants $B,C$ satisfying
\[
	D_V(t) \leq Be^{-tC}.
\]
Therefore for any $i,j \in \{1,2,\cdots,N\}$, the limit $\lim_{t \to \infty}\|x_i(t)-x_j(t)\|$ always exists. Then we can define
	\[[l]:=\{i\in\{1,\cdots,N\}~|~\|x_l(t)-x_i(t)\|\to 0\quad\mbox{as}\quad t\to \infty \}.\]
Since collision does not happen in finite time from Lemma \ref{L3.1}, the proof of Lemma \ref{L3.1} extends globally, and therefore
\[
    \frac{dD_{V,[l]}}{dt} \leq -D_1\phi(D_{X,[l]})D_{V,[l]}+D_2,\quad \mbox{a.e.}~ t \in \bbr_{>0},
\]
for some positive constants $D_1,D_2>0$. Now define
\[
	\tilde{\mathcal{L}}(t) := D_1\int_{D_{X,[l]}(0)}^{D_{X,[l]}(t)}\phi(r)dr.
\]
Then for almost every $t > 0$, $|\tilde{\mathcal{L}}(t)|+D_{V,[l]}(t)$ have a linear or sub-linear growth;
\begin{align}\begin{aligned}\label{D-9}
\frac{d}{dt}|&\tilde{\mathcal{L}}(t)| + \frac{d}{dt}D_{V,[l]}(t)
\leq \left| \frac{d}{dt}\tilde{\mathcal{L}}(t) \right| + \frac{d}{dt}D_{V,[l]}(t)\\
&\leq \left| D_1\phi(D_{X,[l]}(t))\frac{d}{dt}D_{X,[l]}(t)  \right|
+\left( -D_1\phi(D_{X,[l]})D_{V,[l]}+D_2 \right) \leq D_2 < \infty, \quad \text{a.e.} ~ t \in \bbr_{>0}.
\end{aligned}\end{align}
	On the other hand, for sufficiently large $t \gg 1$, we obtain
	\begin{align}\begin{aligned}\label{D-10}
	\frac{\left|\tilde{\mathcal{L}}(t)\right|}{D_1}
	&=
	\left|\int_{D_{X,[l]}(0)}^{D_{X,[l]}(t)} \phi(s) ds\right|
	= \int^{D_{X,[l]}(0)}_{D_{X,[l]}(t)} \phi(s) ds \\
	&\geq \int^{D_{X,[l]}(0)}_{Be^{-tC}} \phi(s) ds
	= \frac{B^{1-\alpha}e^{tC(\alpha-1)}-\left(D_{X,[l]}(0)\right)^{1-\alpha}}{\alpha-1}.
	\end{aligned}\end{align}
Putting \eqref{D-9} and \eqref{D-10} altogether, there exists positive constants $D_3$ and $D_4$ satisfying
	\begin{align*}
		D_3e^{tC(\alpha-1)}-1 \leq \left| \tilde{\mathcal{L}}(t) \right| \leq D_4(1+t), \quad \text{a.e.} ~ t \in \bbr_{>0},
	\end{align*}
which yields a contradiction. Therefore we conclude $[l]$ is the empty set, which proves
	\begin{align*}
		\inf_{t \geq 0}\min_{ 1\leq i\neq j\leq N}\|x_i(t)-x_j(t)\| > 0.
	\end{align*} 
To prove \eqref{D-8}, suppose that $\kappa_1 > 0$ is given. Since
\[
	\lim_{\alpha \searrow 1}\int_{D_X(0)}^\infty x^{-\alpha} = +\infty,
\]
the condition \eqref{D-8} is always achieved when $\alpha>1$ is sufficiently close to 1. This proves \eqref{D-8}.

\end{proof}

\section{Conclusion} \label{sec:5}
In this paper, we have established the sufficient framework for collision avoidance in the dynamical system \eqref{TCSUSS} under both strong and weak singular kernels. To achieve this, we utilized the dissipative structures in terms of $L^\infty$-diameters and applied technical estimates and bootstrapping arguments to obtain global well-posedness, as well as to prove asymptotic flocking behavior and thermal equilibrium phenomenon under appropriate conditions on initial data and systemic parameters. Notably, we have also shown that collision can still occur in the two-particle model despite the singularity of communication, and we imposed suitable conditions on initial data to ensure collision avoidance and maintain strict spacing between agents. As for future work, the following problems remain to be addressed.
\begin{itemize}
	\item (Q1):~Can we enlarge and improve the sufficient framework for collision avoidance? \vspace{-0.2cm}
	\item (Q2):~Can we verify the non-collisional phenomenon on Riemannian manifolds as well?
	\vspace{0.2cm}
	\item(Q3):~Can we rigorously establish a local or global well posedness for the mesoscopic (kinetic) level?
\end{itemize} ~ \newline

\bibliographystyle{amsplain}

\end{document}